\newtheorem{thm}{Theorem}[section]
\newtheorem{cor}[thm]{Corollary}
\newtheorem{conj}[thm]{Conjecture}
\theoremstyle{remark}
\theoremstyle{definition}
\numberwithin{equation}{section}
\begin{document}

\vfuzz0.5pc
\hfuzz0.5pc 

\newcommand{\claimref}[1]{Claim \ref{#1}}
\newcommand{\thmref}[1]{Theorem \ref{#1}}
\newcommand{\propref}[1]{Proposition \ref{#1}}
\newcommand{\lemref}[1]{Lemma \ref{#1}}
\newcommand{\coref}[1]{Corollary \ref{#1}}
\newcommand{\remref}[1]{Remark \ref{#1}}
\newcommand{\conjref}[1]{Conjecture \ref{#1}}
\newcommand{\questionref}[1]{Question \ref{#1}}
\newcommand{\defnref}[1]{Definition \ref{#1}}
\newcommand{\secref}[1]{\S \ref{#1}}
\newcommand{\ssecref}[1]{\ref{#1}}
\newcommand{\sssecref}[1]{\ref{#1}}

\newcommand{\RED}{{\mathrm{red}}}
\newcommand{\tors}{{\mathrm{tors}}}
\newcommand{\eq}{\Leftrightarrow}

\newcommand{\mapright}[1]{\smash{\mathop{\longrightarrow}\limits^{#1}}}
\newcommand{\mapleft}[1]{\smash{\mathop{\longleftarrow}\limits^{#1}}}
\newcommand{\mapdown}[1]{\Big\downarrow\rlap{$\vcenter{\hbox{$\scriptstyle#1$}}$}}
\newcommand{\smapdown}[1]{\downarrow\rlap{$\vcenter{\hbox{$\scriptstyle#1$}}$}}

\newcommand{\A}{{\mathbb A}}
\newcommand{\I}{{\mathcal I}}
\newcommand{\J}{{\mathcal J}}
\newcommand{\CO}{{\mathcal O}}

\newcommand{\C}{{\mathcal C}}
\newcommand{\BC}{{\mathbb C}}
\newcommand{\BQ}{{\mathbb Q}}
\newcommand{\m}{{\mathcal M}}
\newcommand{\h}{{\mathcal H}}
\newcommand{\Z}{{\mathbb Z}}
\newcommand{\BZ}{{\mathbb Z}}
\newcommand{\W}{{\mathcal W}}
\newcommand{\Y}{{\mathcal Y}}

\newcommand{\T}{{\mathcal T}}
\newcommand{\BP}{{\mathbb P}}
\newcommand{\CP}{{\mathcal P}}
\newcommand{\G}{{\mathbb G}}
\newcommand{\BR}{{\mathbb R}}
\newcommand{\D}{{\mathcal D}}
\newcommand{\DD}{{\mathcal D}}
\newcommand{\LL}{{\mathcal L}}
\newcommand{\f}{{\mathcal F}}
\newcommand{\E}{{\mathcal E}}
\newcommand{\BN}{{\mathbb N}}
\newcommand{\N}{{\mathcal N}}
\newcommand{\K}{{\mathcal K}}
\newcommand{\R} {{\mathbb R}}
\newcommand{\PP}{{\mathbb P}}
\newcommand{\Pp}{{\mathbb P}}
\newcommand{\BF}{{\mathbb F}}
\newcommand{\QQ}{{\mathcal Q}}
\newcommand{\closure}[1]{\overline{#1}}
\newcommand{\EQ}{\Leftrightarrow}
\newcommand{\imply}{\Rightarrow}
\newcommand{\isom}{\cong}
\newcommand{\embed}{\hookrightarrow}
\newcommand{\tensor}{\mathop{\otimes}}
\newcommand{\wt}[1]{{\widetilde{#1}}}
\newcommand{\ol}{\overline}
\newcommand{\ul}{\underline}

\newcommand{\bs}{{\backslash}}
\newcommand{\CS}{{\mathcal S}}
\newcommand{\cA}{{\mathcal A}}
\newcommand{\Q}{{\mathbb Q}}
\newcommand{\F}{{\mathcal F}}
\newcommand{\sing}{{\text{sing}}}
\newcommand{\U} {{\mathcal U}}
\newcommand{\B}{{\mathcal B}}
\newcommand{\X}{{\mathcal X}}

\newcommand{\ECS}[1]{E_{#1}(X)}
\newcommand{\CV}[2]{{\mathcal C}_{#1,#2}(X)}

\newcommand{\rank}{\mathop{\mathrm{rank}}\nolimits}
\newcommand{\codim}{\mathop{\mathrm{codim}}\nolimits}
\newcommand{\Ord}{\mathop{\mathrm{Ord}}\nolimits}
\newcommand{\Var}{\mathop{\mathrm{Var}}\nolimits}
\newcommand{\Ext}{\mathop{\mathrm{Ext}}\nolimits}
\newcommand{\EXT}{\mathop{{\mathcal E}\mathrm{xt}}\nolimits}
\newcommand{\Pic}{\mathop{\mathrm{Pic}}\nolimits}
\newcommand{\Spec}{\mathop{\mathrm{Spec}}\nolimits}
\newcommand{\Jac}{\mathop{\mathrm{Jac}}\nolimits}
\newcommand{\Div}{\mathop{\mathrm{Div}}\nolimits}
\newcommand{\sgn}{\mathop{\mathrm{sgn}}\nolimits}
\newcommand{\supp}{\mathop{\mathrm{supp}}\nolimits}
\newcommand{\Hom}{\mathop{\mathrm{Hom}}\nolimits}
\newcommand{\Sym}{\mathop{\mathrm{Sym}}\nolimits}
\newcommand{\nilrad}{\mathop{\mathrm{nilrad}}\nolimits}
\newcommand{\Ann}{\mathop{\mathrm{Ann}}\nolimits}
\newcommand{\Proj}{\mathop{\mathrm{Proj}}\nolimits}
\newcommand{\mult}{\mathop{\mathrm{mult}}\nolimits}
\newcommand{\Bs}{\mathop{\mathrm{Bs}}\nolimits}
\newcommand{\Span}{\mathop{\mathrm{Span}}\nolimits}
\newcommand{\IM}{\mathop{\mathrm{Im}}\nolimits}
\newcommand{\Hol}{\mathop{\mathrm{Hol}}\nolimits}
\newcommand{\End}{\mathop{\mathrm{End}}\nolimits}
\newcommand{\CH}{\mathop{\mathrm{CH}}\nolimits}
\newcommand{\Exec}{\mathop{\mathrm{Exec}}\nolimits}
\newcommand{\SPAN}{\mathop{\mathrm{span}}\nolimits}
\newcommand{\birat}{\mathop{\mathrm{birat}}\nolimits}
\newcommand{\cl}{\mathop{\mathrm{cl}}\nolimits}
\newcommand{\rat}{\mathop{\mathrm{rat}}\nolimits}
\newcommand{\Bir}{\mathop{\mathrm{Bir}}\nolimits}
\newcommand{\Rat}{\mathop{\mathrm{Rat}}\nolimits}
\newcommand{\aut}{\mathop{\mathrm{aut}}\nolimits}
\newcommand{\Aut}{\mathop{\mathrm{Aut}}\nolimits}
\newcommand{\eff}{\mathop{\mathrm{eff}}\nolimits}
\newcommand{\nef}{\mathop{\mathrm{nef}}\nolimits}
\newcommand{\amp}{\mathop{\mathrm{amp}}\nolimits}
\newcommand{\DIV}{\mathop{\mathrm{Div}}\nolimits}
\newcommand{\Bl}{\mathop{\mathrm{Bl}}\nolimits}
\newcommand{\Cox}{\mathop{\mathrm{Cox}}\nolimits}
\newcommand{\NE}{\mathop{\mathrm{NE}}\nolimits}
\newcommand{\NM}{\mathop{\mathrm{NM}}\nolimits}
\newcommand{\Gal}{\mathop{\mathrm{Gal}}\nolimits}
\newcommand{\coker}{\mathop{\mathrm{coker}}\nolimits}
\newcommand{\ch}{\mathop{\mathrm{ch}}\nolimits}
\newcommand{\DM}{\mathop{\mathrm{DM}}\nolimits}
\newcommand{\DMgm}{\mathop{\mathrm{DM}_{\mathrm{gm}}}\nolimits}
\newcommand{\DMgmeff}{\mathop{\mathrm{DM}_{\mathrm{gm}}^{\mathrm{eff}}}\nolimits}
\newcommand{\DMgmab}{\mathop{\mathrm{DM}_{\mathrm{gm}}^{\mathrm{ab}}}\nolimits}
\newcommand{\MHS}{\mathop{\mathrm{MHS}}\nolimits}
\newcommand{\Alb}{\mathop{\mathrm{Alb}}\nolimits}
\newcommand{\NS}{\mathop{\mathrm{NS}}\nolimits}
\newcommand{\InHom}{\mathop{\mathcal{H}om}\nolimits}
\newcommand{\BM}{\mathop{\mathrm{BM}}\nolimits}
\newcommand{\pt}{\mathop{\mathrm{pt}}\nolimits}
\newcommand{\Gr}{\mathop{\mathrm{Gr}}\nolimits}
\newcommand{\im}{\mathop{\mathrm{Im}}\nolimits}

\def\O{\mathcal{O}}
\def\M{\mathcal{M}}
\def\C{\mathbb{C}}
\def\CO{\mathcal{O}}
\def\P{\mathbb{P}}

\def\mysep{.5\baselineskip plus .2\baselineskip minus .2\baselineskip}

\title{$\A^1$-equivalence of zero cycles on surfaces II}

\author{Qizheng Yin}
\address[Yin]{Departement Mathematik\\ETH Z\"urich\\R\"amistrasse 101\\8092 Z\"urich\\Switzerland}
\email{qizheng.yin@math.ethz.ch}

\author{Yi Zhu}
\address[Zhu]{Pure Mathematics\\Univeristy of Waterloo\\Waterloo, ON N2L3G1\\ Canada}
\email{yi.zhu@uwaterloo.ca}


\date{\today}

 \thanks{Q.~Y.~was supported by the grant ERC-2012-AdG-320368-MCSK}
 \keywords{Bloch's conjecture, open algebraic surfaces, $\mathbb{Q}$-homology plane, Suslin homology, mixed motives}

\subjclass[2010]{Primary 	14C25, 14C15, 14F42}
 \begin{abstract}
Using recent developments in the theory of mixed motives, we prove that the log Bloch conjecture holds for an open smooth complex surface if the Bloch conjecture holds for its compactification. This verifies the log Bloch conjecture for all $\Q$-homology planes and for open smooth surfaces which are not of log general type.
 \end{abstract}

\maketitle

\section{Introduction}

\noindent Throughout this paper, we work with varieties over the complex numbers.

\subsection{Statement of the main theorem}
Let $U$ be a smooth quasiprojective algebraic variety. Let $$a:h_0(U)^0\to \Alb(U)$$ be the Albanese morphism from the zeroth Suslin homology of degree zero to the Albanese variety of $U$, and let $T(U):=\ker(a)$ be the Albanese kernel. When $U$ is projective, $h_0(U)$ reduces to the Chow group of zero cycles $\CH_0(U)$. Indeed, we get the classical Albanese map.

In dimension one, the Albanese morphism is well-understood by the classical work of Abel-Jacobi in the projective case, and by Rosenlicht in the open case. 

\begin{thm}[Abel-Jacobi, Rosenlicht \cite{Rosenlicht52,Rosenlicht54}]\label{thm:AJ}
When $\dim U=1$, the Albanese morphism is an isomorphism. 
\end{thm}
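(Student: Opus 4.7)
The plan is to handle the projective and non-proper cases separately, in each case identifying both sides of the Albanese morphism with classical objects attached to the compactification.

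If $U$ is projective, then $h_0(U)^0 = \CH_0(U)^0$ and $\Alb(U) = \Jac(U)$, and the statement reduces to the classical Abel--Jacobi theorem: injectivity is Abel's theorem (a degree-zero divisor mapping to zero in the Jacobian is principal), and surjectivity is Jacobi inversion.

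If $U$ is non-proper, let $\bar U$ denote its smooth projective completion and set $D = \bar U \setminus U$, a reduced boundary divisor. On the Albanese side, Rosenlicht's theorem identifies $\Alb(U)$ with the generalized Jacobian $\Jac(\bar U, D)$, the semi-abelian variety whose group of $\C$-points classifies divisors of degree zero supported on $U$ modulo divisors $\mathrm{div}(g)$ of rational functions $g$ on $\bar U$ satisfying $g|_D = 1$. On the Suslin-homology side, one establishes a parallel identification $h_0(U)^0 \cong \Pic^0(\bar U, D)$: an elementary finite correspondence $Z \subset \A^1 \times U$, after taking closure in $\PP^1 \times \bar U$ and normalizing, produces a rational function whose $0$-cycle boundary $Z|_{t=0} - Z|_{t=1}$, pushed forward to $\bar U$, agrees with $\mathrm{div}(g)$ for some $g$ with $g|_D = 1$ (replacing $f$ by $f/(f-1)$ after pushforward); conversely, every such modulus relation arises in this fashion.

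Under both identifications, the Albanese morphism becomes the tautological map $\Pic^0(\bar U, D) \to \Jac(\bar U, D)$, which is an isomorphism by Rosenlicht's construction of the generalized Jacobian. The main technical obstacle is the Suslin-side identification $h_0(U)^0 \cong \Pic^0(\bar U, D)$, which requires a careful translation between finite correspondences (potentially supported on finite covers of $\bar U$) and modulus-type rational equivalence on $\bar U$; the corresponding identification on the Albanese side is Rosenlicht's original theorem and may be invoked as a black box.
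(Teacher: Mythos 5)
The paper does not prove Theorem~\ref{thm:AJ}; it is cited to Abel--Jacobi and Rosenlicht with no internal argument, so there is nothing in the text to compare against. Your sketch is a correct modern reconstruction of the classical proof. The step you flag as the main technical obstacle, the identification $h_0(U)^0 \cong \Pic^0(\bar U, D)$, is itself a known theorem (the Suslin--Voevodsky computation of the zeroth Suslin homology of a smooth curve as the relative Picard group of its smooth compactification with reduced boundary), so you may invoke it rather than rebuild it.

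Two small precision points worth recording. In the forward direction the substitution $f \mapsto f/(f-1)$ should be made on the normalized closure $\bar{\tilde Z}$ of the correspondence \emph{before} pushing forward; the resulting function on $\bar U$ is then the norm $N(f/(f-1))$, and it restricts to $1$ on $D$ because $Z$ is finite over $\A^1$ and supported in $\A^1 \times U$, so every point of $\bar{\tilde Z}$ lying over $D$ must lie above $\infty \in \PP^1$, where $f/(f-1) = 1$. In the converse direction, the modulus condition $g|_D = 1$ is used precisely to guarantee that $f := g/(g-1)$ sends $D$ to $\infty$, so that $f^{-1}(\A^1) \subset U$ and the graph of $f$ over $\A^1$ is a genuine finite correspondence in $\A^1 \times U$. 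With these bookkeeping points supplied, and Rosenlicht's identification $\Alb(U) \cong \Jac(\bar U, D) \cong \Pic^0(\bar U, D)$ taken as input as you propose, the argument is complete.
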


The higher-dimensional analogue of Theorem \ref{thm:AJ} is much more subtle, although the torsion part of the Albanese morphism is known.

\begin{thm}[Ro{\u\i}tman \cite{Roitman80}, Spie{\ss}-Szamuely \cite{Szamuely03}] \label{thm:torsion}
In arbitrary dimension, the Albanese morphism induces an isomorphism on torsion subgroups. 
\end{thm}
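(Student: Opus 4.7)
The plan is to reduce the problem to showing that $a$ induces an isomorphism on $n$-torsion for every integer $n \geq 1$, since the full torsion subgroup is the filtered union of the $n$-torsion subgroups. Applying the snake lemma to multiplication by $n$ on the defining exact sequence $0 \to T(U) \to h_0(U)^0 \xrightarrow{a} \Alb(U) \to 0$ yields
\[ 0 \to T(U)[n] \to h_0(U)^0[n] \xrightarrow{a[n]} \Alb(U)[n] \to T(U)/n \to h_0(U)^0/n \to \Alb(U)/n \to 0. \]
Hence it suffices to show that the map $h_0(U)^0/n \to \Alb(U)/n$ is an isomorphism, as this will force both $a[n]$ to be surjective and $T(U)[n]$ to vanish.

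The key input on the source side is Suslin's rigidity theorem, which provides a canonical identification of mod-$n$ Suslin homology $h_0(U, \mathbb{Z}/n)$ with mod-$n$ étale Borel--Moore homology of $U$. On the target side, the semi-abelian variety $\Alb(U)$ (for $U$ possibly open) is best encoded as the realization of a $1$-motive $M_1(U)$ attached to $U$; via the Kummer sequence, its $n$-torsion is an étale cohomology group of $U$, which Poincar\'e duality identifies with the relevant étale Borel--Moore homology group. The task then is to verify that the Albanese morphism $a$, after reduction modulo $n$, corresponds to the tautological comparison on the étale side.

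The main obstacle is exactly this compatibility, and it is considerably more delicate in the open case than in the projective one. When $U = X$ is projective, the identification $\Alb(X)[n] \cong H^1_{\mathrm{et}}(X, \mathbb{Z}/n)$ is classical and the compatibility recovers Ro{\u\i}tman's original theorem. When $U$ is open, the toric part of $\Alb(U)$ arising from the boundary of a smooth compactification must be tracked carefully through the rigidity isomorphism. I would work in the triangulated category $\DMgm$, in which the Albanese $1$-motive arises as the $1$-motivic truncation of $M(U)$ in the homotopy $t$-structure; étale realization commutes with this truncation, so that the desired mod-$n$ statement reduces to applying the realization functor to a single morphism of motives, as in the approach of Spie{\ss}--Szamuely.
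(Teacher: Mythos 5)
The paper does not prove Theorem \ref{thm:torsion}; it is a black-box citation of Ro{\u\i}tman \cite{Roitman80} (projective case) and Spie{\ss}--Szamuely \cite{Szamuely03} (general quasiprojective case), so there is no in-paper argument to compare yours against. Your high-level sketch does name the right ingredients of the Spie{\ss}--Szamuely strategy: Suslin rigidity computing mod-$n$ Suslin homology in \'etale terms, the $1$-motive structure on $\Alb(U)$, the Kummer sequence, and the compatibility check.

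However, your opening reduction is logically broken. From the snake-lemma sequence
\[ 0 \to T(U)[n] \to h_0(U)^0[n] \xrightarrow{\,a[n]\,} \Alb(U)[n] \xrightarrow{\,\delta\,} T(U)/n \to h_0(U)^0/n \to \Alb(U)/n \to 0, \]
the hypothesis that $h_0(U)^0/n \to \Alb(U)/n$ is an isomorphism only forces $T(U)/n \to h_0(U)^0/n$ to be the zero map, hence $\delta$ to be \emph{surjective}. It says nothing about $T(U)[n]$, and it pushes against surjectivity of $a[n]$ rather than for it, since $\operatorname{im}(a[n]) = \ker(\delta)$. Concretely, taking $T = \Q/\Z$, $H = (\Q/\Z)^2$, $A = \Q/\Z$ with the evident maps gives $H/n \to A/n$ an isomorphism ($0 \to 0$, both divisible) while $T[n] = \Z/n \neq 0$ and $a[n]$ is merely the projection. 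Note also that $\Alb(U)$ is a semi-abelian variety over $\C$, hence divisible, so $\Alb(U)/n$ is automatically zero; your hypothesis thus amounts to divisibility of $h_0(U)^0$, which is true but insufficient. What you actually need is the stronger statement that $a[n] \colon h_0(U)^0[n] \to \Alb(U)[n]$ is an isomorphism for every $n$: this is where both sides get identified with the same \'etale cohomology group (via rigidity and Kummer theory respectively), and the compatibility you correctly flag as delicate is the entire content of the proof, not a step that can be deferred after a soft snake-lemma reduction.
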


In this paper, we study the two-dimensional case. In one direction, the log Mumford theorem says that the Albanese morphism fails to be injective as long as $p_g(U)\neq 0$.  

\begin{thm}[Mumford \cite{Mumford68}, Zhu \cite{logmumford}]
Let $U$ be a smooth algebraic surface with $p_g(U)\neq0$. Then $T(U)$ is infinite-dimensional.
\end{thm}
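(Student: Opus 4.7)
The strategy I would adopt is to adapt Mumford's original $1968$ argument to the log setting, using logarithmic $2$-forms on a good compactification in place of regular ones. I would begin by choosing a smooth projective compactification $X \supset U$ with simple normal crossings boundary $D := X \setminus U$. Since by definition $p_g(U) = h^0(X, \Omega^2_X(\log D))$, the hypothesis provides a nonzero log $2$-form $\omega \in H^0(X, \Omega^2_X(\log D))$.

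The core of the argument is a Mumford-type dimension count on symmetric products. Form the $2n$-form $\omega_n := p_1^*\omega \wedge \cdots \wedge p_n^*\omega$ on $U^n$; since $\omega$ has even degree, $\omega_n$ is $S_n$-invariant and descends to a top differential form on the symmetric product $U^{(n)}$, nonzero on a nonempty Zariski-open subset. Consider the natural map $\sigma_n \colon U^{(n)} \to h_0(U)$ on degree $n$ cycles. Arguing by contradiction, suppose $T(U)$ is $r$-dimensional, i.e., there is a variety $W$ of dimension $r$ surjecting onto $T(U)$. Combining with the Albanese, whose $n$th symmetric power contributes only a controlled amount to $h_0(U)$, the image of $\sigma_n$ is forced into a subvariety of dimension $\leq c$ for some constant $c$ independent of $n$. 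Hence the generic fiber of $\sigma_n$ has dimension at least $n - c$. These fibers are generated by chains of $\A^1$-morphisms $\A^1 \to U^{(n)}$ whose endpoints lie in the same fiber; pulling back $\omega_n$ along such a morphism gives a form of degree $2n \geq 2$ on the $1$-dimensional base $\A^1$, which vanishes trivially. A Mumford-style tangent-space computation then forces $\omega_n$ to vanish generically on $U^{(n)}$ once $n \gg c$, contradicting the nonvanishing already established.

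The main obstacle is controlling the log poles. Unlike in the projective setting, the $\A^1$-morphisms witnessing the Suslin equivalences are free to sweep through the boundary $D$, and $\omega_n$ has log poles along the compactifying boundary of $U^{(n)}$. One must verify that the residues contribute trivially, i.e., that log $2$-forms annihilate $\A^1$-equivalences in Suslin homology on the nose. Making this compatibility precise, and translating Mumford's tangent-space argument into the logarithmic / Suslin framework, is the technical heart of the proof and is the content of the second author's previous paper \cite{logmumford}.
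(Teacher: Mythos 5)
The paper does not prove this theorem; it is quoted as a known result, attributed to Mumford \cite{Mumford68} for the projective case and to Zhu \cite{logmumford} for the log generalization. So there is no internal proof to compare against, and your sketch is being judged on its own plausibility rather than against the authors' argument.

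As a sketch of the strategy behind the cited result, your outline captures the right shape: choose a log smooth compactification $(X,D)$, take a nonzero $\omega \in H^0(X,\Omega^2_X(\log D))$, form the descended $2n$-form $\omega_n$ on $U^{(n)}$, and run a Mumford-style dimension count against the hypothetical finite-dimensionality of $T(U)$. You also correctly identify the technical heart as verifying that $\A^1$-equivalence in the Suslin sense annihilates log $2$-forms, which is precisely the kind of compatibility Zhu's paper \cite{logmumford} has to establish. One imprecision worth flagging: the finite correspondences $\A^1 \to U$ that define Suslin homology have image entirely in $U$, so they do not literally ``sweep through the boundary $D$.'' The genuine log subtlety lies elsewhere, namely in controlling what happens when one compactifies the parameter curves and the symmetric products, where the closure of a correspondence may meet the boundary and the log poles of $\omega_n$ become relevant to the tangent-space computation. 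Your contradiction step (``forces $\omega_n$ to vanish generically on $U^{(n)}$ once $n \gg c$'') is asserted rather than carried out, but that is the part you reasonably defer to \cite{logmumford}, so as a high-level sketch this is consistent with the literature the paper cites.
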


In the other direction, we expect the following conjecture. When $U$ is projective, it is famously known as the Bloch conjecture \cite{Bloch80}. 

\begin{conj}[Log Bloch conjecture]
Let $U$ be a smooth algebraic surface with $p_g(U)=0$. Then $$T(U)=0.$$
\end{conj}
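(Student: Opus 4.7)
The plan is to reduce the log Bloch conjecture on $U$ to the classical Bloch conjecture on a smooth projective compactification of $U$, using the localization triangle in Voevodsky's category of mixed motives. Because Bloch's conjecture is itself open in general, the concrete statement to target is the conditional one advertised in the abstract: if $X \supset U$ is a smooth projective compactification with simple normal crossings boundary $D := X \setminus U$ and Bloch's conjecture is known for $X$, then $p_g(U) = 0$ forces $T(U) = 0$.

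Fix such a compactification $X$ and consider the Gysin/localization exact triangle in $\DMgm$ attached to the open/closed decomposition $U \hookrightarrow X \hookleftarrow D$. Passing to Suslin homology and restricting to the degree-zero parts should yield a right exact sequence
$$h_0(D)^0 \longrightarrow h_0(X)^0 \longrightarrow h_0(U)^0 \longrightarrow 0.$$
Since $D$ is a (possibly reducible nodal) curve, \thmref{thm:AJ} gives $h_0(D)^0 \cong \Alb(D)$, so this group is already an abelian variety. Naturality of the Albanese morphism places the displayed sequence over the analogous exact sequence $\Alb(D) \to \Alb(X) \to \Alb(U) \to 0$ of (semi-)abelian varieties, and a diagram chase, combined with \thmref{thm:torsion} to absorb any torsion discrepancy on Albanese kernels, identifies $T(U)$ with a quotient of $T(X)$.

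To close the argument one propagates the vanishing $p_g(U) = 0$ to $p_g(X) = 0$. Using the logarithmic description $p_g(U) = h^0(X, \Omega^2_X(\log D))$ together with the inclusion $\Omega^2_X \hookrightarrow \Omega^2_X(\log D)$ (which on a surface corresponds to $\mathcal{O}_X \hookrightarrow \mathcal{O}_X(D)$), we get $p_g(X) \leq p_g(U)$, so the hypothesis passes to $X$. Applying Bloch's conjecture on $X$ yields $T(X) = 0$, and the exact sequence then forces $T(U) = 0$.

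The main obstacle is producing the localization sequence at the level of Albanese kernels rather than at the level of raw Suslin homology. The image of $h_0(D)^0$ in $h_0(X)^0$ must be identified with precisely the piece controlling the difference between $\Alb(X)$ and the semi-abelian $\Alb(U)$, and the various weight-graded and torsion strata of the motives involved must be tracked carefully. This is where the mixed motive framework referred to in the abstract plays an essential role; once the weight pieces are aligned and torsion is absorbed via \thmref{thm:torsion}, the remaining reductions become formal.
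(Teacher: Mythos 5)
The implication you aim to prove (Bloch's conjecture for $X$ plus $p_g(U)=0$ forces $T(U)=0$) is the right target — it is the content of \thmref{thm:1} — but your proposed mechanism cannot work, because it rests on a localization sequence that Suslin homology does not have.

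You assert a right-exact sequence $h_0(D)^0 \to h_0(X)^0 \to h_0(U)^0 \to 0$. This is modeled on the Fulton localization sequence $\CH_0(D) \to \CH_0(X) \to \CH_0(U) \to 0$ for Chow groups, which uses flat pullback along the open immersion $U \hookrightarrow X$. But Suslin homology $h_0(-)$ has no such contravariant map: it is a covariant functor, and for an open immersion the only natural arrow goes $h_0(U) \to h_0(X)$. The Gysin triangle $M(U) \to M(X) \to M(D)(1)[2] \to M(U)[1]$ therefore yields, on $h_0$, a surjection $h_0(U) \twoheadrightarrow h_0(X)$ (since $h_0\big(M(D)(1)[2]\big) = \CH_{-1}(D) = 0$), not a map $h_0(X) \to h_0(U)$. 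Here $h_0(U)$ and $\CH_0(U)$ genuinely differ for non-projective $U$ — Suslin homology only allows rational equivalences coming from functions on curves that are finite over $\mathbb{A}^1$ with support in $U$, so it is a finer invariant and in general much larger.

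Once the arrows are pointed correctly, your argument proves the \emph{opposite} implication: the commutative square of $h_0(U)^0 \twoheadrightarrow h_0(X)^0$ over $\Alb(U) \twoheadrightarrow \Alb(X)$ shows that $T(U) = 0$ implies $T(X) = 0$ (via the five lemma, as in the paper's proof of (1)$\Rightarrow$(2) in \thmref{thm:2}). The direction you need — from $T(X) = 0$ to $T(U) = 0$ — is the hard one and cannot be extracted from the Gysin triangle alone: passing from the compact $X$ to the open $U$ potentially enlarges the Albanese kernel. That is precisely why the paper invokes the derived Albanese functor $L\Alb$ of \cite{BVK}, realization comparisons (\thmref{thm:real}, \coref{cor:real}), and Wildeshaus's conservativity result (\thmref{thm:cons}) on the subcategory $\DMgmab$. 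One first shows $T(X) = 0$ forces $M(X) \in \DMgmab$ (Guletski{\u\i}--Pedrini), hence $M(U) \in \DMgmab$; one then truncates $M(U)$ by $L\Alb$ to a piece $M'(U)$ whose Betti realization is controlled by $p_g(U) = 0$, and uses conservativity to show $M'(U)^\vee(2)[4]$ is itself in $d_{\leq 1}\DMgmeff$, so that its contribution to $h_0(U)$ can be computed explicitly and shown to vanish. None of this is ``formal'' diagram-chasing; it is the substance of the theorem.
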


Using recent developments in the theory of mixed motives \cite{Ayoub09,Ayoub11,BVK,Ayoub15}, we prove the following theorem. 

\begin{thm}\label{thm:1}
Let $(X,D)$ be a log smooth projective surface pair with interior $U$. If $p_g(U)=0$, in particular, $p_g(X)=0$ as well, then the log Bloch conjecture holds for $U$ if and only if it holds for $X$. 
\end{thm}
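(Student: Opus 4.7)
The plan is to identify the Albanese kernel $T(U)$ with $T(X)$ by comparing the transcendental parts of the motives of $U$ and $X$ inside Voevodsky's triangulated category $\DMgm$, using the motivic Albanese functor of Ayoub and the derived category of 1-motives of Barbieri-Viale-Kahn.

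First, for the log smooth pair $(X,D)$ with $D=\bigcup_i D_i$ a simple normal crossings divisor, I would write down the Gysin/localization distinguished triangle
$$M(U)\longrightarrow M(X)\longrightarrow M(D^{\nu})(1)[2]\longrightarrow M(U)[1],$$
refined by the simplicial resolution of the crossings when necessary. Each $D_i$ is a smooth curve, so by \thmref{thm:AJ} its motive is ``$1$-motivic'', i.e.\ lies in the thick subcategory $\DMgm_{\le 1}\subset \DMgm$ generated by motives of curves. Hence the third term of the triangle is $1$-motivic as well. Applying the motivic Albanese functor and passing to cofibers then yields an isomorphism between the ``transcendental pieces'' $t_2(U)$ and $t_2(X)$ of the two motives.

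Second, I would translate this motivic identification into one of Albanese kernels. Classically $T(X)=\CH_0(t_2(X))$, and in the open case Ayoub's framework provides an analogous description of $T(U)$ as a Suslin-homology invariant of $t_2(U)$, compatibly with the natural map $\CH_0(X)\to h_0(U)$ coming from the localization triangle. Combined with $t_2(U)\cong t_2(X)$, this gives $T(U)\cong T(X)$, which establishes both directions of the theorem.

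The main obstacle is ensuring that the transcendental cofiber attached to the open variety $U$ actually computes its Albanese kernel, which is where the hypothesis $p_g(U)=0$ enters essentially. Since $p_g(U)=h^0(X,\Omega^2_X(\log D))$, the vanishing forces $p_g(X)=0$ and, via Hodge realization, rules out new ``log transcendental'' contributions from the boundary divisor $D$ that would otherwise produce cycles in $T(U)$ with no projective analogue. The results of \cite{Ayoub09,Ayoub11,BVK,Ayoub15} supply precisely the categorical framework needed to make this comparison precise and to guarantee that the motivic Albanese functor behaves well when restricted to the open surface.
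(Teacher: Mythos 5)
Your idea of comparing the ``transcendental'' pieces of $M(U)$ and $M(X)$ via the Gysin triangle does capture one true and useful fact: since the boundary divisor contributes only motives of smooth curves (which lie in $d_{\leq 1}\DMgmeff$, on which $L\Alb$ acts as the identity), the cofiber of $M(U)\to L\Alb(U)$ is indeed isomorphic to the cofiber of $M(X)\to L\Alb(X)$. But the jump from there to ``$T(U)\cong T(X)$'' is a genuine gap, and in fact this isomorphism of Albanese kernels is a \emph{stronger} assertion than the theorem and is not established. The Albanese kernel is the \emph{image} of $h_0(M'(\,\cdot\,))\to h_0(\,\cdot\,)$ (where $M'$ denotes the fiber of $M\to L\Alb$); even though the left-hand groups agree for $U$ and $X$, the maps land in the two different groups $h_0(U)$ and $h_0(X)$, and $h_0(U)\to h_0(X)$ is surjective but generally has kernel. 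So knowing $T(X)=0$ gives no control over $T(U)$ from this diagram alone. What you have actually proved is only the (easy) implication $T(U)=0\Rightarrow T(X)=0$.

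The missing ingredient — which you name-check via \cite{Ayoub15} but never actually invoke — is a conservativity statement. In the paper's proof, the hypothesis $p_g(U)=0$ is used to show (via Theorem \ref{thm:real} and Corollary \ref{cor:real}) that $M'(U)^\vee(2)[4]$ has Betti realization concentrated in degrees $0$ and $1$, so its canonical map to $L\Alb\big(M'(U)^\vee(2)[4]\big)$ is a Betti isomorphism. One then needs to upgrade this to an actual isomorphism in $\DMgm$, and this is exactly where Wildeshaus's Theorem \ref{thm:cons} enters; the hypothesis $T(X)=0$ is used precisely to place these motives in $\DMgmab$ via Guletski{\u\i}--Pedrini, so that conservativity applies. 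After that, a direct computation with the structure of $L_i\Alb^c(U)$ as $1$-motives (the extension \eqref{eq:ext}, Murre's Chow--K\"unneth projectors for the abelian part, and vanishing of $\CH_{-2}(\pt)$ and $\CH_{-2}(\pt,1)$) shows that the relevant $h_0$ vanishes, hence $T(U)=0$. None of this computation is replaced by anything in your proposal; in particular, your suggested role for $p_g(U)=0$ (``ruling out log transcendental contributions from $D$'') misidentifies where the hypothesis is used — it controls the degree range of the Betti realization of $M'(U)$, not the boundary contribution, which is harmless for any $D$.
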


Since the Bloch conjecture holds for any smooth projective surface $X$ with $\kappa(X)\le 1$ \cite{Bloch76}, our main theorem yields the following corollary.

\begin{cor}\label{cor}
The log Bloch conjecture holds for $U$ if $\kappa(X)\le 1$. \qed
\end{cor}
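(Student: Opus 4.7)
The plan is extremely short: the corollary is essentially a one-line application of \thmref{thm:1} combined with the cited result of Bloch \cite{Bloch76} that the Bloch conjecture is known for projective surfaces of Kodaira dimension at most one. So the whole task is to connect these two inputs cleanly.

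First I would unpack the hypothesis. The statement ``the log Bloch conjecture holds for $U$'' carries its own standing assumption $p_g(U)=0$; there is nothing to prove otherwise. Assuming $p_g(U)=0$, \thmref{thm:1} applies and in particular tells us that $p_g(X)=0$ as well, and reduces the vanishing of $T(U)$ to the vanishing of $T(X)$. Thus it suffices to verify the ordinary Bloch conjecture for the smooth projective surface $X$.

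Second, I would invoke Bloch's theorem: for any smooth projective surface $X$ with $\kappa(X)\le 1$, the Bloch conjecture holds, i.e. $T(X)=0$ whenever $p_g(X)=0$. Since we are precisely in that situation by the previous step, we conclude $T(U)=0$, which is the log Bloch conjecture for $U$.

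There is no real obstacle here — the content sits entirely in \thmref{thm:1} (which is the main theorem of the paper) and in Bloch's classical result. The only point requiring a sentence of care is making explicit that the hypothesis $p_g(U)=0$ of the log Bloch conjecture propagates, via \thmref{thm:1}, to the hypothesis $p_g(X)=0$ needed to apply Bloch's theorem on surfaces with $\kappa(X)\le 1$.
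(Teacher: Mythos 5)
Your proposal is correct and follows exactly the paper's argument: the corollary is the immediate combination of Theorem \ref{thm:1} with Bloch--Kas--Lieberman \cite{Bloch76} for $\kappa(X)\le 1$, with the observation that $p_g(U)=0$ forces $p_g(X)=0$. Nothing to add.
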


Since $\kappa(X)\le\kappa(U)$, Corollary \ref{cor} generalizes the result of Bloch-Kas-Lieberman \cite{Bloch76} to open surfaces of $\kappa(U)\le 1$. It also covers the second author's previous result \cite{logmumford} on the log Bloch conjecture for $\kappa(U)=-\infty$.

Further, we may apply Theorem \ref{thm:1} to the case where $X$ is of general type and the Bloch conjecture is true. The Bloch conjecture holds in a great number of cases; see \cite{BCP, Pedrini13,Voisin14} for recent developments.

\subsection{Applications of Theorem \ref{thm:1} and Corollary \ref{cor}}

The birational geometry of open surfaces is developed by Kawamata \cite{Kawamata1979}, while it is almost impossible to hope for a complete classification even for $\kappa(U) \le 1$. We would like to focus on three special classes of surfaces whose geometry is extremely complicated.






\vspace{\mysep}
\noindent \textbf{Example 1} ($\kappa(U)=-\infty$)\textbf{: log del Pezzo surfaces}

\vspace{\mysep} 
Let $U$ be the smooth locus of a singular del Pezzo surface of Picard number one with at worst quotient singularities. In general, such singular del Pezzo's form an unbounded family. Partial classifications are obtained in \cite{KM} with more than sixty exceptional collections. A difficult theorem of Keel-McKernan \cite{KM} states that $U$ is log rationally connected. In particular, it implies the log Bloch conjecture for $U$ \cite[Prop.~4.3]{logmumford}.

Since Theorem \ref{thm:1} and Corollary \ref{cor} do not depend on Keel-McKernan's result, we give a new proof of the following result.

\begin{cor}\label{cor:d}
With the notation as above, we have $h_0(U)=\Z$.
\end{cor}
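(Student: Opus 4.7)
The plan is to apply Corollary \ref{cor} to a log smooth compactification of $U$ in order to obtain $T(U)=0$, then to argue directly that $\Alb(U)=0$, and to conclude by combining the two via the short exact sequence $0 \to T(U) \to h_0(U)^0 \to \Alb(U) \to 0$ together with the degree map $h_0(U) \twoheadrightarrow \Z$. Concretely, I would take $\pi\colon X \to Y$ to be the minimal resolution of the singular del Pezzo $Y$ and set $D = \pi^{-1}(Y_{\sing})_{\RED}$, so that $(X,D)$ is a log smooth projective pair with interior $U$ (up to a further blow-up to enforce strict normal crossings, which preserves rationality and the other features used below).

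I would first check the hypotheses of Corollary \ref{cor}, namely $\kappa(X)\le 1$ and $p_g(U)=0$. Since $Y$ is log terminal with $-K_Y$ ample, $h^1(Y,\CO_Y)=0$ and $K_X$ fails to be pseudo-effective (because $\pi^*K_Y$ is anti-ample and all discrepancies exceed $-1$), so $X$ is rational by Enriques--Kodaira; in particular $\kappa(X)=-\infty$ and $q(X)=p_g(X)=0$. For $p_g(U) = h^0(X,\Omega^2_X(\log D)) = h^0(X,K_X+D)$ I would use the restriction sequence
\[
0 \to \CO_X(K_X) \to \CO_X(K_X+D) \to \omega_D \to 0
\]
(where $(K_X+D)|_D=\omega_D$ by adjunction on the log smooth pair): both $H^0(X,K_X)$ and $H^0(D,\omega_D)$ vanish---the former because $X$ is rational, the latter because $h^0(\omega_D)=h^1(\CO_D)$ by Serre duality on $D$, and each connected component of $D$ is a tree of smooth rational curves (the standard combinatorial description of the exceptional fibre of the minimal resolution of a quotient surface singularity), forcing $h^1(\CO_D)=0$. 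Thus $p_g(U)=0$ and Corollary \ref{cor} gives $T(U)=0$.

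To finish, I would show $\Alb(U)=0$. For a smooth open surface the generalized Albanese is semi-abelian and sits in $0 \to T \to \Alb(U) \to \Alb(X) \to 0$, with $\Alb(X)=0$ because $X$ is rational. The cocharacter lattice of $T$ is the kernel of the cycle class map $\bigoplus_i \Z\cdot E_i \to \NS(X)$, $E_i \mapsto [E_i]$. Because the $E_i$ are the components of an exceptional divisor of a birational morphism between smooth projective surfaces, the intersection matrix $(E_i\cdot E_j)$ is negative definite, so the $[E_i]$ are independent in $\NS(X)$; hence $T=0$ and $\Alb(U)=0$. Combining $T(U)=0$ with $\Alb(U)=0$ forces $h_0(U)^0=0$, and the degree map then yields $h_0(U)=\Z$. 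The main obstacle in this plan is producing the two vanishings $p_g(U)=0$ and $\Alb(U)=0$; both reduce cleanly to the well-known fact that the exceptional divisor of the minimal resolution of a quotient surface singularity is a disjoint union of trees of smooth rational curves with negative definite intersection form.
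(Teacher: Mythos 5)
Your proposal is correct and follows essentially the same route as the paper: pass to a minimal log resolution $(X,D)$ of $Y$, invoke Corollary \ref{cor} to get $T(U)=0$, then show $q(U)=0$ (equivalently $\Alb(U)=0$) and conclude via the degree map. The paper handles $q(U)=q(X)=0$ by citing Zhang's lemma and the exceptionality of $D$, while you spell out these verifications directly (rationality of $X$, vanishing of the torus part via negative definiteness of the exceptional intersection form) and also make explicit the check $p_g(U)=0$, a hypothesis of Corollary \ref{cor} that the paper leaves implicit.
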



\noindent \textbf{Example 2} ($\kappa(U)=0$)\textbf{: log Enriques surfaces}

\vspace{\mysep} 	 
A projective normal surface $Y$ is said to be a \emph{log Enriques surface} if 
\begin{enumerate}
	\item $Y$ has at worst quotient singularities;
	\item $NK_Y\sim \O_Y$ for some positive integer $N$;
	\item $\dim H^1(Y,\O_Y)=0$. 
\end{enumerate}

Since $K_Y$ is $\Q$-Cartier, we define the \emph{index} $I$ of $Y$ to be the smallest positive integer that $IK_Y \sim \O_Y$. By the work of Kawamata \cite{Kawamata1979}, Tsunoda \cite{Tsunoda83}, and Zhang \cite{logE}, the index is bounded by $66$, while classically (when $Y$ is smooth projective) it is bounded by $6$. 

\begin{cor}\label{cor:e}
Let $U$ be the smooth locus of a log Enriques surface of index $\ge 2$ defined as above. Then $h_0(U)=\Z$.
\end{cor}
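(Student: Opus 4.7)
The plan is to reduce to \coref{cor} by resolving $Y$ to a log smooth projective pair and then verifying that both the Bloch kernel $T(U)$ and the Albanese $\Alb(U)$ of the complement vanish.

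First, I would take the minimal resolution $\pi: X \to Y$. Since quotient surface singularities are resolved by trees of smooth rational curves meeting transversally, the reduced exceptional divisor $D = \pi^{-1}(\mathrm{Sing}\, Y)_{\mathrm{red}}$ is already SNC, so $(X, D)$ is a log smooth projective pair with interior $U = X \setminus D$.

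Next I would verify the hypotheses of \coref{cor}. Writing $K_X = \pi^*K_Y + \sum a_i E_i$, the standard bound $-1 < a_i \leq 0$ on the minimal resolution of klt surface singularities (which follows from $K_X \cdot E_j \geq 0$ combined with the M-matrix structure of the negative-definite exceptional intersection) together with $IK_Y \sim 0$ gives $IK_X \sim \sum Ia_i E_i$, an anti-effective divisor; hence $\kappa(X) \leq 0$. Since quotient singularities are rational, $q(X) = h^1(X, \O_X) = h^1(Y, \O_Y) = 0$. By the Enriques-Kodaira classification, $X$ is either rational or an Enriques surface: the K3 case is excluded because $K_X \sim 0$ would push down to $K_Y \sim 0$, contradicting $I \geq 2$, while abelian and bielliptic surfaces are excluded by $q(X) = 0$. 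In both remaining cases $p_g(X) = 0$. I would then transfer this to $p_g(U) = h^{2,0}(U)$ via the weight spectral sequence for the MHS on $H^2(U)$: the $(2,0)$-part lives in $\mathrm{Gr}^W_2 H^2(U) = \mathrm{image}(H^2(X) \to H^2(U))$, whose kernel from $H^2(X)$ is generated by the $(1,1)$-classes $[D_i]$. Hence $p_g(U) = p_g(X) = 0$.

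With both hypotheses verified, \coref{cor} yields $T(U) = 0$. For the Albanese, the abelian quotient of the semiabelian variety $\Alb(U)$ is $\Alb(X) = 0$ (since $q(X) = 0$), and the toric part has dimension equal to the corank of the cycle-class map $\bigoplus_i \Q[D_i] \to H^2(X, \Q)$, which vanishes by the negative-definiteness of the exceptional intersection matrix. Therefore $h_0(U)^0 = T(U) = 0$, and the degree map identifies $h_0(U)$ with $\Z$. The main obstacle is the identification $p_g(U) = p_g(X)$---i.e., ruling out new $(2,0)$-classes coming from log poles along $D$---while the remainder of the argument is classification of $X$ and linear algebra on the exceptional configuration.
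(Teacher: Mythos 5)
Your overall strategy is the same as the paper's: take a minimal log resolution $(X,D)$ of $Y$, invoke \coref{cor} (which requires $p_g(U)=0$ and $\kappa(X)\le 1$) to get $T(U)=0$, and then show $q(U)=0$ to kill the Albanese. The paper handles the verification of $q(U)=0$ by citing \cite[Lem.~1.2]{logE} (and uses $q(U)=q(X)$ via the same independence-of-$[D_i]$ observation you make), so you are reproving those references from scratch, which is fine. Two steps of your explicit verification, however, have real holes.

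First, the exclusion of K3 in the classification argument. You write that ``the K3 case is excluded because $K_X \sim 0$ would push down to $K_Y \sim 0$,'' but the minimal resolution $X$ of $Y$ need not be a minimal \emph{surface}: it can carry $(-1)$-curves outside the exceptional locus, in which case $K_X$ is not $\sim 0$ even if the minimal model of $X$ is K3, so your pushdown argument does not apply. The fix is direct and does not need the classification at all: write $IK_X \sim \sum Ia_iE_i = -E$ with $E\geq 0$. If some $a_i < 0$ then $E\neq 0$, so $P_I(X)=0$, and since $p_g(X)>0$ would force $P_I(X)>0$, we get $p_g(X)=0$. If all $a_i=0$ then $IK_X\sim 0$, so $K_X$ is torsion; a surface with torsion $K_X$ is automatically minimal (a $(-1)$-curve would have $K_X\cdot E=-1$), and $K_X$ has order exactly $I\geq 2$ because $\pi^*$ is injective on $\Pic$, ruling out K3 (order $1$) and leaving Enriques, so $p_g(X)=0$. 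You should replace your argument with something like this.

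Second, the step $p_g(U)=p_g(X)$. Recall $p_g(U):=\dim H^0(\Omega^2_X(\log D))=\dim F^2H^2(U,\C)$. Your claim that ``the $(2,0)$-part lives in $\Gr^W_2H^2(U)$'' is true of the Hodge type $(2,0)$, but $F^2H^2(U)$ also receives the $(2,1)$-part of $\Gr^W_3$ and all of $\Gr^W_4$ (which is pure of type $(2,2)$), so matching the $(2,0)$-parts alone does not give $p_g(U)=p_g(X)$. What saves you is exactly the structure you noted at the outset: $D$ is a forest of $\BP^1$'s. Hence $\Gr^W_3 H^2(U)$ is a subquotient of $H^1(D^{(1)})(-1)=0$, and $\Gr^W_4 H^2(U)$ is the kernel of the Gysin differential $H^0(D^{(2)})(-2)\to H^2(D^{(1)})(-1)$, which is injective because the dual graph is acyclic. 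You need to say this explicitly; as written the argument establishes only that the weight-$2$ graded piece contributes $p_g(X)$, not that nothing else contributes. (Minor: you say ``corank'' where you mean the dimension of the kernel of $\bigoplus_i\Q[D_i]\to H^2(X,\Q)$; the conclusion via negative-definiteness is still correct.)
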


Log Enriques surfaces are partially classified in \cite{logE,logE2,Kud02,Kud04}. There are more than $1000$ examples of log Enriques surfaces with $\delta$-invariant $2$ \cite{Kud02}.

\proof[Proof of Corollaries \ref{cor:d}, \ref{cor:e}] Let $(X,D)$ be a minimal log resolution of $U$. By Corollary \ref{cor}, the log Bloch conjecture holds in both cases. It suffices to show $q(U)=0$.  Since $D$ is the exceptional set of the resolution of quotient singularities, we have $q(U)=q(X)$. Now the del Pezzo case follows from \cite[Lem.~1.1 (3)]{Zhang89} and the Enriques case from \cite[Lem.~1.2]{logE}. \qed

\vspace{\mysep}
\noindent \textbf{Example 3: $\Q$-homology planes}

\vspace{\mysep}
A smooth surface $U$ is a \emph{$\Q$-homology plane} if $H^i(U,\Q)=H^i(\A^2,\Q)$ for any $i$. A $\Q$-homology plane can have log Kodaira dimension $-\infty$, $0$, $1$, or~$2$. Ramanujam \cite{Ramanujam71} constructed the first homology plane of log general type which is topologically contractible. They are classified for log Kodaira dimension $\le 1$, but there is no thorough classification of $\Q$-homology planes of log general type \cite[Sect.~3.4]{Miyanishi}. 

Since all $\Q$-homology planes are rational \cite{Gurjar99}, Corollary \ref{cor} implies:

\begin{cor}
Let $U$ be a $\Q$-homology plane. Then the log Bloch conjecture holds, that is, $h_0(U)=\Z$.\qed
\end{cor}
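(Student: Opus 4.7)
The plan is to reduce directly to Corollary \ref{cor} via a rational compactification. Let $(X,D)$ be any log smooth projective compactification of $U$, so that $D$ is a simple normal crossings divisor and $U = X \setminus D$. Since $U$ is a $\Q$-homology plane, we have $H^1(U,\Q) = 0$ and $H^2(U,\Q) = 0$; the second identity forces $p_g(U) = 0$ via Deligne's mixed Hodge theory, because $p_g(U) = \dim H^0(X,\Omega^2_X(\log D))$ sits as a Hodge summand of $H^2(U,\C)$. By Gurjar's theorem \cite{Gurjar99}, $U$ is rational, so its compactification $X$ is a smooth rational projective surface, hence $\kappa(X) = -\infty \le 1$. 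Corollary \ref{cor} then produces $T(U) = 0$.

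To upgrade this to $h_0(U) = \Z$, it remains to verify that $\Alb(U)$ vanishes. The semi-abelian Albanese variety $\Alb(U)$ has dimension $\dim_\C H^0(X,\Omega^1_X(\log D))$, and this group embeds as a Hodge summand of $H^1(U,\C)$. Since $H^1(U,\Q) = 0$, we conclude $\Alb(U) = 0$. Combined with $T(U) = 0$ and the connectedness of $U$, the defining exact sequence
\[
0 \to T(U) \to h_0(U)^0 \to \Alb(U) \to 0
\]
yields $h_0(U)^0 = 0$, whence $h_0(U) = \Z$.

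The only substantive ingredient beyond Theorem \ref{thm:1} and its Corollary \ref{cor} is Gurjar's rationality theorem for $\Q$-homology planes, which is invoked as a black box. Granting this input, every remaining step is a bookkeeping check: the Kodaira dimension bound comes for free from rationality, the vanishing of $p_g(U)$ and of $\Alb(U)$ are direct consequences of the $\Q$-acyclicity defining a $\Q$-homology plane, and there is no genuine obstacle to overcome.
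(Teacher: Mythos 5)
Your proof is correct and follows the same route as the paper: Gurjar's rationality theorem combined with Corollary \ref{cor}. The paper states this corollary with a bare \qed, leaving implicit exactly the verifications you spell out (that $H^2(U,\Q)=0$ forces $p_g(U)=0$ via degeneration of the log Hodge-to-de~Rham spectral sequence, that $H^1(U,\Q)=0$ forces $\Alb(U)=0$, and that $H^0(U,\Q)=\Q$ gives connectedness), so your write-up is a faithful expansion of the intended argument rather than a different one.
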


The Bloch conjecture for fake projective planes remains unknown.


\subsection{Ideas from mixed motives}
The proof of our main theorem has two main ingredients. One is the work of Ayoub, Barbieri-Viale, and Kahn \cite{Ayoub09,Ayoub11,BVK} on the derived category of $1$-motives, especially the construction of a derived Albanese functor. The use is twofold: first, it gives a motivic interpretation of the Albanese morphism, allowing us to apply tools from the theory of mixed motives. Second, it provides a way to eliminate ``easy'' pieces of the motive of $U$ (essentially $1$-motives) while keeping track of the homological realization.

The other ingredient is the famous conservativity conjecture; see \cite{Ayoub15}. Regarded as one of the key conjectures in the study of motives, it notably says that a geometric motives is trivial if and only if its homological realization is trivial. By truncating the motive of $U$ using the derived Albanese functor, we arrive at a motive which has trivial homological realization and whose motivic homology controls the Albanese kernel $T(U)$. Therefore, the conservativity conjecture implies the log Bloch conjecture for $U$. Part of our main theorem then follows from a special case of the conservativity conjecture proven by Wildeshaus \cite{Wilde15}.

Further, it is worth mentioning that the work of Bondarko-Sosnilo \cite{BS14}, if well-interpreted, might also lead to our results.

\subsection{Notation}
A \emph{log pair} $(X,D)$ means a variety $X$ with a reduced Weil divisor $D$. We say that $(X,D)$ is \emph{log smooth} if $X$ is smooth and $D$ is a simple normal crossing divisor on $X$. A log pair is projective if the ambient variety is projective. 
	
Given any smooth quasiprojective variety $U$, by the resolution of singularities, we may choose a log smooth projective compactification $(X,D)$ with interior $U$. We use $\kappa(X,D)$ to denote the \emph{log Kodaira dimension}. We define the \emph{log geometric genus} $p_g(X,D):=\dim H^0\big(\Omega^{\dim X}_X(\log D)\big)$ and the \emph{log irregularity} $q(X,D):=\dim H^0\big(\Omega^{1}_X(\log D)\big)$. Since they do not depend on the compactification, we may write $\kappa(U)$, $p_g(U)$, and $q(U)$ as well.

\subsection*{Acknowledgment}
We would like to thank Joseph Ayoub for explaining his results. We thank Qile Chen and Javier Fres\'an for helpful discussions. This work was initiated during the AMS Summer Institute in Algebraic Geometry at the University of Utah, 2015. The authors would like to thank the Summer Institute for its hospitality and inspiring environment. 

\section{Preliminaries}

\noindent By Theorem \ref{thm:torsion}, it suffices to consider the Albanese morphism with $\Q$-coefficients. From now on, all (co)homology, cycle groups, and motives are taken with $\Q$-coefficients.

\subsection{Mixed motives and conservativity}
We refer to \cite{VSF} for Voevodsky's theory of mixed motives. Since we work with $\Q$-coefficients, the categories of mixed motives in the Nisnevich and \'etale topologies are equivalent, with or without transfers; see \cite{Ayoub14}.

Let $\DMgm$ denote the triangulated category of geometric motives, and let $\DMgmeff$ denote the triangulated category of effective geometric motives. We follow the homological convention. The unit object of $\DMgm$ is denoted by $\Q(0)$, or simply $\Q$, and the Tate object $\Q(1)$. Given an object $M \in \DMgm$, its dual object $\InHom_{\DMgm}(M, \Q)$ is denoted by $M^\vee$. The motive of a smooth variety $Y$ is denoted by $M(Y) \in \DMgmeff$.

The $i$-th motivic homology of $M \in \DMgm$ is defined to be
$$h_i(M) = \Hom_{\DMgm}\big(\Q[i], M\big).$$
For $M = M(Y)$, this recovers the $i$-th Suslin homology $h_i(Y) = h_i\big(M(Y)\big)$.

Further, we refer to \cite{Huber} for the Hodge realization functor
$$R^H: \DMgm \to D^b(\MHS).$$
Composing with the forgetful functor $D^b(\MHS) \to D^b(\Q)$, we obtain the Betti realization $R^B: \DMgm \to D^b(\Q)$. Recall the statement of the conservativity conjecture.

\begin{conj}[see {\cite[Conj.~2.1]{Ayoub15}}]
The Betti realization functor $R^B$ is conservative. In other words, a morphism $f: M \to N$ in $\DMgm$ is an isomorphism if and only if $R^B(f): R^B(M) \to R^B(N)$ is an isomorphism. 
\end{conj}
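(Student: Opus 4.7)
The conservativity of $R^B$ is a celebrated open conjecture, so any honest sketch here is a strategy rather than a complete argument. The plan is to descend from $\DMgm$ to the heart of the Chow weight structure on $\DMgm$ in the sense of Bondarko, whose heart (with $\Q$-coefficients) is the category $\mathrm{Chow}(\BC)_\Q$ of Chow motives over $\BC$. The Betti realization $R^B$ is an additive triangulated functor compatible with this weight structure, sending a pure Chow motive $M(Y)(n)$ with $Y$ smooth projective to the bounded complex of $\Q$-vector spaces given by singular cohomology of $Y$ (up to shift). Applying Bondarko's weight complex functor and the induced weight spectral sequence for $R^B$, conservativity of $R^B$ on $\DMgm$ would follow from conservativity of singular cohomology on $\mathrm{Chow}(\BC)_\Q$ together with vanishing of the relevant nilpotent ideal of phantom maps.

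The second step is then the classical reformulation: a correspondence $\alpha \in \CH^*(Y \times Y')_\Q$ whose singular cohomology class vanishes must already vanish modulo nilpotents in the convolution algebra of correspondences. Equivalently, homological and numerical equivalence on correspondences should coincide and their common ideal should be nilpotent. This is precisely Kimura--O'Sullivan finite-dimensionality for every Chow motive, which itself follows from the standard conjectures of Hodge and Lefschetz type together with semisimplicity of numerical equivalence. One would then attack finite-dimensionality class by class: it is known for motives of abelian type (Kimura), for motives generated by $1$-motives (the regime in which the Ayoub--Barbieri-Viale--Kahn equivalence used in this paper operates), for motives of surfaces with $p_g=0$ where Bloch's conjecture is established (Guletskii--Pedrini and successors), and for varieties dominated by products of curves.

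The main obstacle is the passage to arbitrary Chow motives of higher-dimensional varieties of general type, where Kimura finiteness is itself a major unsolved conjecture and no present technique delivers the required nilpotence without essentially assuming it. Consequently, for the applications in this paper, a more realistic intermediate plan is not to prove the full conjecture but only its restriction to the subcategory of $\DMgm$ generated by the derived Albanese truncation of $M(U)$, which is the only piece that actually enters \thmref{thm:1}; this restricted case is precisely what Wildeshaus \cite{Wilde15} establishes. Pursuing the Bondarko--Sosnilo perspective flagged in the introduction, one can reformulate conservativity on such a subcategory in terms of an equivalence of hearts of appropriate weight structures, thereby sidestepping Kimura finiteness for general-type Chow motives which are not needed for the log Bloch problem at hand.
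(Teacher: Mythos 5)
This statement is a \emph{conjecture}, not a theorem, and the paper offers no proof of it: it is recorded verbatim (following Ayoub) and then the only result actually invoked in the argument is Wildeshaus's Theorem~\ref{thm:cons}, which establishes conservativity on the subcategory $\DMgmab$ generated by curves under summands, tensor products, and duality. You have correctly recognized all of this, and your last paragraph accurately locates the piece that the paper actually uses. There is therefore nothing to compare at the level of ``the paper's proof versus yours,'' because the paper does not claim a proof.

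As a discussion of strategy your sketch is broadly consonant with the literature, but a few points deserve sharpening. First, it is not quite right to say that conservativity of $R^B$ on $\DMgm$ ``would follow from conservativity of singular cohomology on $\mathrm{Chow}(\BC)_\Q$ together with vanishing of phantom maps''; the weight complex functor is not conservative in general, and even granting Kimura--O'Sullivan finiteness for all Chow motives, the deduction of full conservativity is delicate (this is essentially the content of Wildeshaus's work, which needs not only finiteness but also consequences of the standard conjecture~D for abelian varieties and a careful analysis of weight structures). Second, Kimura finiteness together with the standard conjectures is believed to imply conservativity, but this implication is itself a theorem only on suitable subcategories; stating it as a clean two-step reduction overstates what is known. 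Third, and most usefully for this paper: the motive that needs conservativity is $M'(U)^\vee(2)[4]$, which lies in $\DMgmab$ precisely because $M(U)$ does (equivalently $T(X)=0$, by Guletski{\u\i}--Pedrini). That chain of reductions, not a direct attack on the conjecture, is what makes the paper's argument go through, and your proposal would be stronger if it foregrounded that reduction rather than the general-type obstruction, which the paper simply avoids by hypothesis.
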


Using consequences of the standard conjecture D for abelian varieties \cite{AK}, Kimura-O'Sullivan finiteness \cite{Kimura05}, and Bondarko's weight structures \cite{Bond09,Bond10}, Wildeshaus proved the following special case of the conservativity conjecture.

\begin{thm}[Wildeshaus {\cite[Th.~1.12]{Wilde15}}] \label{thm:cons}
Let $\DMgmab \subset \DMgm$ denote the smallest triangulated subcategory containing the motives of smooth curves and closed under direct summands, tensor products, and duality. Then the restriction of $R^B$ to $\DMgmab$ is conservative.
\end{thm}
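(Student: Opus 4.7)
The plan is to invoke Bondarko's Chow weight structure on $\DMgm$ to reduce conservativity on $\DMgmab$ to conservativity on its heart, and then to settle the latter via Kimura-O'Sullivan finiteness together with the standard conjecture $D$ for abelian varieties.

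First I would recall that $\DMgm$ (with $\Q$-coefficients) carries Bondarko's Chow weight structure $w_{\mathrm{Chow}}$, whose heart is the idempotent-completed category $\mathrm{CHM}$ of Chow motives. Since $\DMgmab$ is the smallest triangulated subcategory of $\DMgm$ containing the motives of smooth curves and closed under summands, tensor products, and duality, $w_{\mathrm{Chow}}$ restricts to a weight structure on $\DMgmab$ whose heart is $\mathrm{CHM}^{\mathrm{ab}}$, the rigid tensor pseudo-abelian subcategory of $\mathrm{CHM}$ generated by motives of smooth projective curves. In particular each $M \in \DMgmab$ admits a weight Postnikov tower with graded pieces in $\mathrm{CHM}^{\mathrm{ab}}[n]$, and Bondarko's formalism supplies a functorial weight spectral sequence for $R^B$.

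Weight-exactness of $R^B$ then forces conservativity on $\DMgmab$ to reduce to conservativity of $R^B$ on the heart $\mathrm{CHM}^{\mathrm{ab}}$: if $R^B(M)=0$ the spectral sequence degenerates and the graded pieces have vanishing Betti realization, so it suffices to show they are themselves zero in $\mathrm{CHM}^{\mathrm{ab}}$. So suppose $N \in \mathrm{CHM}^{\mathrm{ab}}$ satisfies $R^B(N)=0$; since Betti realization factors through the category of homological motives, $N$ is homologically trivial. The two inputs I would now apply are: (i) every object of $\mathrm{CHM}^{\mathrm{ab}}$ is Kimura finite-dimensional, because $h(C) = \Q \oplus h^1(C) \oplus \Q(-1)$ with $h^1(C)$ (the motive of the Jacobian) oddly finite-dimensional and the Tate summands evenly finite-dimensional, and Kimura finiteness is stable under $\otimes$, duals, and summands; (ii) Lieberman's proof of the standard conjecture $D$ for abelian varieties, transported via Andre-Kahn to the tensor subcategory they generate, forces homological and numerical equivalence to coincide on $\mathrm{CHM}^{\mathrm{ab}}$. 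Hence $N$ is numerically trivial, and by Kimura's theorem (a Kimura-finite, numerically trivial motive is zero) we conclude $N=0$.

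The main obstacle is the weight-theoretic reduction itself: one must verify that the Chow weight structure genuinely restricts to $\DMgmab$, that $R^B$ is sufficiently weight-exact on this subcategory for the spectral sequence argument to give a clean reduction to the heart, and that the heart of the restriction really is $\mathrm{CHM}^{\mathrm{ab}}$ rather than something larger. This is precisely where Bondarko's machinery from \cite{Bond09,Bond10} is indispensable, and checking these compatibilities forms the technical core of Wildeshaus's argument; by contrast the conservativity statement on the heart, once isolated, follows almost formally from the combination of Kimura finiteness with the classical work of Lieberman and Andre-Kahn cited in \cite{AK,Kimura05}.
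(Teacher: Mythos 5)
The paper does not prove this statement: it is imported verbatim from Wildeshaus \cite{Wilde15}, and the paper only records the list of inputs --- consequences of the standard conjecture $D$ for abelian varieties \cite{AK}, Kimura--O'Sullivan finiteness \cite{Kimura05}, and Bondarko's weight structures \cite{Bond09,Bond10}. You have identified exactly these three ingredients, and your argument on the heart is sound: objects of the pseudo-abelian rigid tensor subcategory of Chow motives generated by curves are Kimura-finite (curves split as $\Q\oplus h^1\oplus\Q(-1)$ with $h^1$ oddly finite, and finiteness is stable under $\otimes$, dual, and summand), homological triviality implies numerical triviality there by Lieberman and Andr\'e--Kahn, and a numerically trivial Kimura-finite Chow motive vanishes by Kimura's nilpotence theorem.

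The gap is in the descent from $\DMgmab$ to its heart, and it is more than ``a compatibility to check.'' Your sentence ``if $R^B(M)=0$ the spectral sequence degenerates and the graded pieces have vanishing Betti realization'' does not follow from weight-exactness. The weight spectral sequence has $E_1$-page built from $R^B$ of the weight-graded pieces and converges to the cohomology of $R^B(M)$; vanishing of the abutment gives only $E_\infty=0$, and an exact (but nonzero) $E_1$-complex has exactly that property. Nor is there any reason $R^B(M)=0$ should force $E_1$-degeneration --- asserting that the weight-graded pieces of $M$ have vanishing realization whenever $M$ does is essentially what one is trying to prove, so as written the reduction is circular. The mechanism that actually accomplishes the descent is Bondarko's weight complex functor combined with the nilpotence of the radical on the heart (which is exactly what Kimura finiteness plus $D$ supplies), propagated to morphisms between weight-bounded objects; that is the substantive content of Wildeshaus's Theorem 1.12, and your outline defers it entirely. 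So: correct ingredients, correct argument on the heart, but the bridge from the triangulated category to the heart --- which is the theorem --- is missing.
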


By introducing $\DMgmab$, we may reformulate our main theorem as follows.

\begin{thm} \label{thm:2}
Under the assumption as in Theorem \ref{thm:1}, the following three conditions are equivalent:
\begin{enumerate}
\item $T(U) = 0$;

\item $T(X) = 0$;

\item $M(U)$, $M(X) \in \DMgmab$.
\end{enumerate}
\end{thm}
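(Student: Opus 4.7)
My approach is, for each $Y \in \{U,X\}$, to apply the derived Albanese functor of Ayoub-Barbieri-Viale-Kahn to produce a canonical distinguished triangle $N(Y) \to M(Y) \to C(Y) \to N(Y)[1]$ in $\DMgm$, where $N(Y)$ is the $1$-motivic part built from motives of smooth curves and Tate twists---so that $N(Y)\in\DMgmab$ automatically---and $C(Y)$ is a ``transcendental'' remainder whose degree-zero motivic homology $h_0(C(Y))$ recovers $T(Y)$ (the torsion discrepancy being controlled by \thmref{thm:torsion}). The proof then plays these triangles against Wildeshaus's conservativity \thmref{thm:cons} and against the Gysin/localization triangle comparing $M(U)$ and $M(X)$.

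\textbf{The direction $(3)\Rightarrow(1),(2)$.} Assuming $M(Y)\in\DMgmab$, the cone $C(Y)$ also lies in $\DMgmab$, being the cone of a morphism between objects of $\DMgmab$. I would then compute the Hodge realization $R^H(C(Y))$: in the projective case it is the transcendental summand of $H^2(X,\Q)$, and in the open case the log-transcendental summand of $H^2(U,\Q)$. The hypothesis $p_g(U)=0$ (hence $p_g(X)=0$) forces these Hodge structures to be of pure type $(1,1)$; by Lefschetz $(1,1)$ they are algebraic, and have already been absorbed into the $1$-motivic part $N(Y)$. Thus $R^B(C(Y))=0$, and \thmref{thm:cons} gives $C(Y)=0$ in $\DMgm$, so $T(Y)=h_0(C(Y))=0$.

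\textbf{The direction $(1)$ or $(2)\Rightarrow(3)$.} For $(2)\Rightarrow(3)$ on $X$ I would use the Bloch-Srinivas decomposition of the diagonal: under $T(X)=0$ and $p_g(X)=0$, the motive $M(X)$ admits a Chow-Künneth decomposition whose summands are motives of $\Alb(X)$ (isogenous to a Jacobian) together with Tate twists, hence $M(X)\in\DMgmab$. To promote this to $M(U)\in\DMgmab$ I would invoke the Gysin/localization triangle attached to the SNC pair $(X,D)$: since $D$ is a SNC divisor on a surface, its components and pairwise intersections are smooth curves and points, so the cone of $M(U)\to M(X)$ is an iterated extension of such motives with Tate twists, and therefore lies in $\DMgmab$. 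The symmetric argument handles $(1)\Rightarrow(3)$ starting from $U$, with the $(1)\Leftrightarrow(2)$ equivalence established en route by the same localization triangle.

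\textbf{Main obstacle.} I expect the main difficulty to be the precise identification of $C(U)$ with the log-transcendental motive of $U$, i.e.\ establishing $h_0(C(U))\simeq T(U)$ together with the identification of $R^H(C(U))$ with the log-transcendental summand of $H^2(U,\Q)$. In the open setting $\Alb(U)$ is only semi-abelian and $N(U)$ carries a nontrivial weight filtration, so matching the LAlb-truncation against the Hodge-theoretic decomposition requires the full strength of the BVK framework and Ayoub's realization-comparison results. A secondary subtlety is ensuring that the Bloch-Srinivas decomposition of the diagonal genuinely lifts to an honest direct-sum decomposition of $M(X)$ inside $\DMgm$ itself, rather than only after Hodge realization, so that membership in $\DMgmab$ is obtained as a statement about objects, not just about their cohomology.
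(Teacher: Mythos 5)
Your overall toolkit (derived Albanese functor, conservativity, Gysin triangles) is the right one, but the proposal has a genuine gap in the direction $(3)\Rightarrow(1)$, and the route to $(3)$ from $(1)$ or $(2)$ is under-specified.

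The central problem is the assumed triangle $N(Y)\to M(Y)\to C(Y)$ with $N(Y)$ the ``full'' $1$-motivic part and $R^B(C(Y))=0$. The functor $L\Alb$ is a \emph{left} adjoint to the inclusion of $d_{\leq 1}\DMgmeff$, so what it hands you is a canonical map $M(Y)\to L\Alb(Y)$, not a map \emph{into} $M(Y)$; the $1$-motivic piece sits on the right of the triangle, not the left. More importantly, $L\Alb(Y)$ only captures the Betti homology in degrees $0,1,2$ (and only the $(\leq 1)$-part in degree $2$). The cone $M'(U)$ of $M(U)\to L\Alb(U)$ therefore has nonzero Betti realization in degrees $3$ and $4$: with $p_g(U)=0$ one gets $H_i(R^B(M'(U)))=H_i(U,\Q)$ for $i=3,4$ and $0$ otherwise. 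So the claim that the ``remainder'' has vanishing Betti realization is false for the object the $L\Alb$-machinery actually produces. The paper's key move---which your plan is missing---is a second pass: one shows $M'(U)^\vee(2)[4]$ is effective, has Betti homology concentrated in degrees $0,1$, and hence the unit map $M'(U)^\vee(2)[4]\to L\Alb(M'(U)^\vee(2)[4])$ is a Betti-isomorphism; conservativity (applicable because $(3)$ puts these objects in $\DMgmab$) then promotes it to an isomorphism in $\DMgm$, after which $h_0$ of the resulting $1$-motive (computed via the extensions structure of $L_0\Alb^c,L_1\Alb^c$ and vanishing of $\CH_{-2}(\pt)$, $\CH_{-2}(\pt,1)$, and the $M_3$-piece of a projective surface) is shown to vanish. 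Conservativity is used to identify a \emph{morphism} as an isomorphism, not to kill an object whose realization vanishes. You yourself flag the identification $h_0(C(Y))\simeq T(Y)$ as the main obstacle; with the triangle oriented as you have it, that identification is simply not available, and it is precisely what the paper's two-stage argument replaces.

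For $(2)\Rightarrow(3)$ your Bloch--Srinivas route is essentially the Guletski{\u\i}--Pedrini theorem the paper cites, and passing to $M(U)$ via Gysin triangles is fine. But the ``symmetric argument for $(1)\Rightarrow(3)$'' does not exist as stated: there is no open-surface analogue of Guletski{\u\i}--Pedrini that turns $T(U)=0$ directly into $M(U)\in\DMgmab$, and the semi-abelian Albanese plus weight-filtration complications you acknowledge are exactly where this fails. The paper instead proves $(1)\Rightarrow(2)$ by induction on a partial compactification $U\subset Y\subset X$, using the Gysin triangle, the surjectivity of $h_0(U)\to h_0(Y)$, the vanishing of $h_0(\Q(1)[2])$, and the five lemma applied to the $L\Alb$-comparison diagram; only then does one invoke the projective result. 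Your plan would need to either reproduce this inductive step or find a genuinely new argument to bridge the open case.
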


\subsection{Derived category of \texorpdfstring{$1$}{1}-motives}

We shall mainly follow the book of Barbieri-Viale-Kahn \cite{BVK}. Let $\M_1$ denote Deligne's category of $1$-motives \cite{Hodgeiii} with $\Q$-coefficients. By \cite[Th.~3.4.1]{Orgogo}, the bounded derived category $D^b(\M_1)$ can be naturally identified with the thick triangulated subcategory of $\DMgmeff$ generated by the motives of smooth curves, denoted by $d_{\leq 1}\DMgmeff$. The identification is compatible with realizations \cite{Vologod}. For simplicity we always make this identification.

One of the main results of \cite{BVK} is the construction of a \emph{derived Albanese functor}.

\begin{thm}[{\cite[Cor.~6.2.2]{BVK}}]
The inclusion $d_{\leq 1}\DMgmeff \hookrightarrow \DMgmeff$ admits a left adjoint
$$L\Alb: \DMgmeff \to d_{\leq 1}\DMgmeff.$$
\end{thm}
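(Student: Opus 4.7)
The plan is to work in Voevodsky's compactly generated unbounded triangulated category $\DM^{\mathrm{eff}}$ of effective motivic complexes, in which $\DMgmeff$ sits as the full subcategory of compact objects; construct the adjoint at that level, and descend afterwards. Let $d_{\leq 1}\DM^{\mathrm{eff}}$ denote the localizing subcategory generated by the motives $M(C)$ of smooth curves. Since the generators are compact in the ambient category, Neeman's theorem on compactly generated triangulated categories identifies the compact objects of $d_{\leq 1}\DM^{\mathrm{eff}}$ with the thick envelope of these generators inside $\DMgmeff$, namely $d_{\leq 1}\DMgmeff$.

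The bulk of the work is to construct a functor $L\Alb: \DM^{\mathrm{eff}} \to d_{\leq 1}\DM^{\mathrm{eff}}$ left adjoint to the inclusion $\iota$. Via the equivalence $D^b(\M_1) \simeq d_{\leq 1}\DMgmeff$, one may think of the target as Deligne's derived category of $1$-motives. On a compact generator $M(X)$ with $X$ smooth, $L\Alb(M(X))$ is modelled by the $1$-motive assembled from the Albanese $\Alb(X)$ together with (for non-proper $X$) the lattice of boundary divisor classes. The crucial input is then the universal property: for every $N \in d_{\leq 1}\DM^{\mathrm{eff}}$, each map $M(X) \to N$ in $\DM^{\mathrm{eff}}$ factors uniquely through $L\Alb(M(X))$. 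Granted this on generators, one extends $L\Alb$ to all of $\DM^{\mathrm{eff}}$ by homotopy colimits and obtains the adjunction in full generality.

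It then remains to restrict to compact objects. A left adjoint between compactly generated triangulated categories preserves compactness if and only if its right adjoint preserves arbitrary coproducts; here the right adjoint is $\iota$, which trivially does. Hence $L\Alb$ sends $\DMgmeff$ into $d_{\leq 1}\DMgmeff$, and the big-category adjunction restricts to the adjunction claimed in the theorem.

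The hard step is the universal property on generators. Classically the Albanese is a left adjoint from pointed smooth varieties to semiabelian varieties, but upgrading this to a motivic adjunction against arbitrary $1$-motivic targets requires precise control of the groups $\Hom_{\DM^{\mathrm{eff}}}(M(X),[F \to G])$ in terms of torsors, extensions, and divisor classes, together with the full Barbieri-Viale-Kahn comparison $D^b(\M_1) \simeq d_{\leq 1}\DMgmeff$. Matching these contributions on the two sides of the adjunction is the technical heart of the construction.
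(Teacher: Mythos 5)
This statement is cited from Barbieri-Viale--Kahn [BVK, Cor.~6.2.2]; the paper does not prove it. So the comparison must be against BVK's argument rather than anything in the text.

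Your high-level outline is the right one: work in the big effective category $\DM^{\mathrm{eff}}$, identify $d_{\leq 1}\DMgmeff$ as the compact objects of the localizing subcategory generated by curves via Neeman/Thomason, build the adjunction there, and descend. The descent step is also correctly handled: a left adjoint between compactly generated triangulated categories preserves compacts precisely when its right adjoint preserves coproducts, and the inclusion of a localizing subcategory does preserve coproducts, so the big $L\Alb$ restricts to the stated small adjunction. This part matches the architecture of BVK.

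The genuine gap is in the middle. ``Define $L\Alb$ on generators $M(X)$ and extend by homotopy colimits'' is not a construction: in a triangulated category, homotopy colimits are not functorial, and specifying an object on generators together with a universal property does not hand you a functor on the whole category. The subtler point is that a left adjoint to a fully faithful inclusion $\iota$ is a reflector, and the easy direction of Brown representability produces a \emph{right} adjoint to a coproduct-preserving functor, not a left adjoint. You need a separate argument---either a dual/perfect generation form of Brown representability, or an explicit lift of the adjunction to an underlying model. BVK take the latter route: they construct the Albanese and Picard functors at the level of sheaves with transfers versus $1$-motivic sheaves (i.e.\ on abelian categories, where adjunctions and derived functors are well behaved), and only then pass to the triangulated picture. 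Your sketch collapses exactly this sheaf-level step, which is where the adjunction is actually produced. Finally, a minor imprecision: $L\Alb(M(X))$ for $X$ a surface is not a single $1$-motive but a complex in $D^b(\M_1)$ with nonzero terms $L_i\Alb(X)$ for $i=0,1,2$, as the paper itself uses.
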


We list a number of results and facts about the functor $L\Alb$, which will be used in the proof of our main theorem. To begin with, when $Y$ is a smooth variety, we write $L\Alb(Y) = L\Alb\big(M(Y)\big)$. Then the natural morphism $M(Y) \to L\Alb(Y)$ induces a morphism in motivic homology
\begin{equation} \label{eq:motalb}
h_0(Y) \to h_0\big(L\Alb(Y)\big).
\end{equation}
By \cite[Lem.~13.4.2]{BVK}, we have
$$h_0\big(L\Alb(Y)\big)^0 = \Alb(Y) \otimes \Q,$$
and the degree zero part of \eqref{eq:motalb} coincides with the Albanese morphism.

The next statement concerns the Hodge realization of $L\Alb(M)$ for $M \in \DMgmeff$. Recall that a mixed Hodge structure $H$ is \emph{effective} if the $(i, j)$-th part of the weight-graded piece $\Gr_{i + j}^W H$ vanishes unless $i$, $j \leq 0$. Given an effective mixed Hodge structure $H$, let $H_{\leq 1}$ denote the maximal quotient of~$H$ of weights $\geq -2$ and of types $(0, 0)$, $(0, -1)$, $(-1, 0)$, and $(-1, -1)$.

\begin{thm}[{\cite[Th.~15.3.1]{BVK}}] \label{thm:real}
For $M \in \DMgmeff$, the morphism $M \to L\Alb(M)$ induces isomorphisms
$$H_i\big(R^H(M)\big)_{\leq 1} \xrightarrow{\sim} H_i\Big(R^H\big(L\Alb(M)\big)\Big).$$
\end{thm}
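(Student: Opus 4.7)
The plan is to show that the unit morphism $M \to L\Alb(M)$, after Hodge realization, factors as the $1$-motivic truncation $H_i\big(R^H(M)\big) \twoheadrightarrow H_i\big(R^H(M)\big)_{\leq 1}$ followed by an isomorphism. Since $L\Alb(M) \in d_{\leq 1}\DMgmeff$, Vologodsky's compatibility identifies $R^H\big(L\Alb(M)\big)$ with the Hodge realization of a complex of Deligne $1$-motives; in particular each of its cohomologies is automatically concentrated in weights $\{0,-1,-2\}$ and Hodge types $\{(0,0),(0,-1),(-1,0),(-1,-1)\}$. By the universal property of the truncation $(-)_{\leq 1}$ on effective mixed Hodge structures, the induced map factors through $H_i\big(R^H(M)\big)_{\leq 1}$; the task is then to show this factoring is an isomorphism.

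First I would verify that both functors $M \mapsto H_i\big(R^H(M)\big)_{\leq 1}$ and $M \mapsto H_i\big(R^H(L\Alb(M))\big)$ are cohomological on $\DMgmeff$. For the former this uses the exactness of $(-)_{\leq 1}$ on effective MHS, which follows from the Deligne splitting together with the effectivity hypothesis (only weights $0$, $-1$, $-2$ can contribute, and the $(-1,-1)$-summand of $\Gr^W_{-2}$ is canonically split off). For the latter it is formal from the triangulated nature of $L\Alb$ and $R^H$. It then suffices to check the isomorphism on the generators $M = M(Y)$ for $Y$ smooth.

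For such $Y$ I would introduce the motivic $1$-motives $L_i\Alb(Y) := H_i\big(L\Alb(Y)\big)$ and construct a natural comparison $R^H\big(L_i\Alb(Y)\big) \to M_i(Y)$ to Deligne's $1$-motives $M_i(Y)$ attached to $H_i(Y, \Q)$. The smooth projective case is essentially due to Deligne and Lieberman, realized via the Picard, Albanese, and N\'eron-Severi $1$-motives in degrees $1$ and $2\dim Y - 1$ and $2\dim Y - 2$ respectively. The general smooth case reduces to it by choosing a log smooth compactification with simple normal crossing boundary and comparing both sides via the associated weight spectral sequence; the terms of the spectral sequence on the motivic side are motives of smooth projective strata of the boundary, to which the smooth projective case applies.

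The main obstacle is this last compatibility: matching the purely motivic quotient $L_i\Alb(Y)$ with the Hodge-theoretic maximal $1$-motivic quotient of $H_i(Y,\Q)$. The cleanest route is to descend the entire comparison to smooth projective curves, where $M(Y)$ already lies in $d_{\leq 1}\DMgmeff$, so $L\Alb$ acts as the identity and the statement is tautological; extending this tautology to all smooth varieties and all cohomological degrees, while simultaneously controlling both the weight and the Hodge-type filtrations and ensuring strict compatibility of the motivic and Hodge-theoretic $1$-motive constructions, is the technical heart of the argument.
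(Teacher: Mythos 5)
This theorem is quoted verbatim from Barbieri-Viale--Kahn (\cite[Th.~15.3.1]{BVK}); the paper does not supply a proof, so there is no in-paper argument to compare against. Your sketch is broadly aligned with the actual BVK strategy: observe that $L\Alb(M)$ realizes to a complex of Deligne $1$-motives so the factoring through $(-)_{\leq 1}$ is automatic, check that $(-)_{\leq 1}$ is exact on effective MHS (your argument via strictness of $W$ and the canonical splitting of the $(-1,-1)$-summand of $\Gr^W_{-2}$ is correct), devissage to $M=M(Y)$ for $Y$ smooth, and then compare the explicit $1$-motives $L_i\Alb(Y)$ against the Deligne $1$-motives attached to $H_i(Y)$ via a compactification and the associated weight spectral sequence. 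This is essentially the shape of the BVK proof, which occupies a substantial portion of Chapters 8--15 of their book.

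Two caveats, one minor and one more serious. The minor one: the attribution of the smooth projective case to ``Deligne and Lieberman'' is off; the relevant extraction of $1$-motives from $H^1$, $H^{2n-1}$, $H^2$, $H^{2n-2}$ of a smooth projective variety is Deligne's (and for the middle degrees involves the N\'eron--Severi torus), and the motivic side in BVK rests on a careful analysis of $L_i\Alb$ via adjunction, not on pre-existing Chow--K\"unneth projectors. The more serious issue is your closing suggestion that ``the cleanest route is to descend the entire comparison to smooth projective curves, where $L\Alb$ is the identity, and then extend this tautology to all smooth varieties.'' This cannot work as stated: $\DMgmeff$ is generated by $M(Y)$ for \emph{all} smooth $Y$, and motives of surfaces and higher-dimensional varieties are not built from curve motives by cones and summands inside $\DMgmeff$ (indeed, for a surface $X$ the statement $M(X)\in\DMgmab$ is \emph{equivalent} to the Bloch conjecture for $X$, as used elsewhere in the paper). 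So while the theorem is tautological on $d_{\leq 1}\DMgmeff$, there is no formal ``extension of tautology'' to $\DMgmeff$; one genuinely has to compute $L_i\Alb(Y)$ for arbitrary smooth $Y$ and match it against Deligne's Hodge-theoretic construction. You correctly flag this as the technical heart, but the proposed shortcut does not exist, and the honest route is the one you outline in the middle paragraph (explicit smooth projective case, then weight spectral sequence of a log smooth compactification).
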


The theorem above applies to $L\Alb(Y)$ and also to the Borel-Moore variant of $L\Alb(Y)$. Let $M^c(Y) \in \DMgmeff$ denote the motive of $Y$ with compact support. By \cite[Ch.~5, Th.~4.3.7]{VSF}, there is an isomorphism
$$M^c(Y) \simeq M(Y)^\vee(\dim Y)[2\dim Y].$$
We write $L\Alb^c(Y) = L\Alb\big(M^c(Y)\big)$.

\begin{cor}[{\cite[Cor.~15.3.2]{BVK}}] \label{cor:real}
By Theorem \ref{thm:real}, we have
$$H_i\Big(R^H\big(L\Alb(Y)\big)\Big) =
\begin{cases}
H_0(Y, \Q) & i = 0 \\
H_1(Y, \Q) & i = 1 \\
H_2(Y, \Q)_{\leq 1} & i = 2 \\
0 & i < 0 \text{ or } i > 2
\end{cases}$$
and
$$H_i\Big(R^H\big(L\Alb^c(Y)\big)\Big) =
\begin{cases}
H_0^{\BM}(Y, \Q) & i = 0 \\
H_1^{\BM}(Y, \Q) & i = 1 \\
H_i^{\BM}(Y, \Q)_{\leq 1} & 2 \leq i \leq \dim Y + 1 \\
0 & i < 0 \text{ or } i > \dim Y + 1.
\end{cases}$$
\end{cor}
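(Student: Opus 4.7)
Corollary \ref{cor:real} is a degree-by-degree unpacking of \thmref{thm:real} applied to the effective motives $M(Y), M^c(Y) \in \DMgmeff$, combined with Deligne's standard weight bounds on singular and Borel--Moore homology. The Hodge realization functor of \cite{Huber} identifies $R^H(M(Y))$ with the singular chain complex of $Y$ and $R^H(M^c(Y))$ with its Borel--Moore analogue, each equipped with its canonical mixed Hodge structure; hence $H_i(R^H(M(Y))) = H_i(Y, \Q)$ and $H_i(R^H(M^c(Y))) = H_i^{\BM}(Y, \Q)$ as MHS. Applying \thmref{thm:real} to the natural morphisms $M(Y) \to L\Alb(Y)$ and $M^c(Y) \to L\Alb^c(Y)$ then yields
$$H_i\bigl(R^H(L\Alb(Y))\bigr) \cong H_i(Y, \Q)_{\leq 1}, \qquad H_i\bigl(R^H(L\Alb^c(Y))\bigr) \cong H_i^{\BM}(Y, \Q)_{\leq 1}.$$

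It remains to evaluate $(-)_{\leq 1}$ degree by degree. For singular homology, $H_i(Y, \Q)$ has weights in $[-2i, -i]$. Thus $H_0(Y, \Q)$ is pure of weight $0$ and Hodge type $(0, 0)$, while $H_1(Y, \Q)$ has weights in $\{-1, -2\}$ with the only possible Hodge types $(0, -1)$, $(-1, 0)$, $(-1, -1)$ (the weight-$2$ piece of $H^1(Y)$ being pure of type $(1, 1)$ by a residue computation in any smooth compactification); all of these lie in the admissible list, so $(-)_{\leq 1}$ acts as the identity on $H_0$ and $H_1$. For $H_2(Y, \Q)$, only the weight-$(-2)$ piece survives and, among its Hodge types, only $(-1, -1)$ is admissible, giving the stated formula. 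For $i \geq 3$, the maximum weight $-i \leq -3$ is already below $-2$, so $(-)_{\leq 1}$ vanishes. For Borel--Moore homology of a smooth $Y$ of dimension $d$, Poincar\'e duality $H_i^{\BM}(Y, \Q) \cong H^{2d - i}(Y, \Q)(d)$ gives weights in $[-i, 2d - 2i]$; together with the effectivity constraint $p, q \leq 0$ coming from $M^c(Y) \in \DMgmeff$, a parallel case analysis yields the claimed formula, with vanishing exactly when $2d - 2i < -2$, i.e.\ when $i > \dim Y + 1$.

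The argument is essentially formal once \thmref{thm:real} and the compatibility of $R^H$ with the definitions of $M(Y)$ and $M^c(Y)$ (including the duality $M^c(Y) \simeq M(Y)^\vee(\dim Y)[2 \dim Y]$) are granted; both are supplied by \cite{Huber,BVK}. The only real work is the weight and Hodge-type bookkeeping sketched above, which I do not see as a genuine obstacle.
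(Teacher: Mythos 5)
Your proposal is correct and follows essentially the same route as the source: the paper quotes this as \cite[Cor.~15.3.2]{BVK} with only the one-line justification ``By Theorem~\ref{thm:real},'' and your argument supplies exactly the expected derivation, namely identifying $H_i(R^H(M(Y)))$ and $H_i(R^H(M^c(Y)))$ with $H_i(Y,\Q)$ and $H_i^{\BM}(Y,\Q)$, applying Theorem~\ref{thm:real}, and then working out the truncation $(\cdot)_{\leq 1}$ via Deligne's weight and Hodge-type bounds for smooth varieties. The only nit is that the Borel--Moore weight interval $[-i, 2d-2i]$ is correct as an upper bound only for $i \geq d$ (for $i < d$ the top weight is $0$, not $2d-2i$), but this does not affect the vanishing threshold $i > \dim Y + 1$ or the rest of the bookkeeping.
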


Finally, we recall the fact that $\M_1$ is of cohomological dimension one \cite[Prop.~3.2.4]{Orgogo}. Hence, all elements in $D^b(\M_1)$ can be represented by complexes with zero differentials. In particular, we have
$$L\Alb(Y) \simeq \bigoplus_{i = 0}^2L_i\Alb(Y)[i] \ \text{ and } \ L\Alb^c(Y) \simeq \bigoplus_{i = 0}^{\dim Y + 1} L_i\Alb^c(Y)[i],$$
with $L_i\Alb(Y)$, $L_i\Alb^c(Y) \in \M_1$; see \cite[Cor.~9.2.3, Prop.~10.6.2]{BVK}. When $\dim Y = 1$, this gives the ``Chow-K\"unneth'' decomposition of $M(Y)$ \cite[Cor~11.1.1]{BVK}
\begin{equation} \label{eq:ckcurve}
M(Y) \simeq L\Alb(Y) \simeq \bigoplus_{i = 0}^2L_i\Alb(Y)[i].
\end{equation}

\section{Proof of the main theorem}

\noindent In this section we prove our main theorem, that is, Theorem \ref{thm:2}.

\subsection{Proof of (1) \texorpdfstring{$\Rightarrow$}{=>} (2) \texorpdfstring{$\Rightarrow$}{=>} (3)}
For (1) $\Rightarrow$ (2), consider a partial compactification $U \subset Y \subset X$ such that $C = Y \setminus U$ is a smooth curve. By induction, it suffices to show that $T(U) = 0$ implies $T(Y) = 0$.

Recall the Gysin distinguished triangle \cite[Ch.~5, Prop.~3.5.4]{VSF}
$$M(U) \to M(Y) \to M(C)(1)[2] \to M(U)[1].$$
By applying the functor $L\Alb$, we find a morphism of distinguished triangles
\begin{equation} \label{eq:gysin}
\begin{tikzcd}
M(U) \arrow{r}\arrow{d} & M(Y) \arrow{r}\arrow{d} & M(C)(1)[2] \arrow{r}\arrow{d} & M(U)[1] \arrow{d} \\
L\Alb(U) \arrow{r} & L\Alb(Y) \arrow{r} & \Q(1)[2] \arrow{r} & L\Alb(U)[1].
\end{tikzcd}
\end{equation}
Here we used the fact that $L\Alb\big(M(C)(1)\big) \simeq \Q(1)$ \cite[Prop.~8.2.3]{BVK}. Moreover, the morphism
$$M(C)(1) \to L\Alb\big(M(C)(1)\big) \simeq \Q(1)$$
coincides with the projection in \eqref{eq:ckcurve}
$$M(C) \to L_0\Alb(C) \simeq \Q$$
twisted by $\Q(1)$.

Now we apply motivic homology to the distinguished triangles in \eqref{eq:gysin}. Since $h_0(U) \to h_0(Y)$ is surjective \cite[Lem.~4.2]{logmumford} and
$$h_0\big(\Q(1)[2]\big) = \CH_{-1}(\pt) = 0,$$
we obtain a commutative diagram with exact rows
$$\begin{tikzcd}
h_1\big(M(C)(1)[2]\big) \arrow{r}\arrow{d} & h_0(U) \arrow{r}\arrow{d} & h_0(Y) \arrow{r}\arrow{d} & 0 \arrow{d} \\
h_1\big(\Q(1)[2]\big) \arrow{r} & h_0\big(L\Alb(U)\big) \arrow{r} & h_0\big(L\Alb(Y)\big) \arrow{r} & 0.
\end{tikzcd}$$
The first vertical arrow is surjective since it comes from a projection. The middle vertical arrows are given by the Albanese morphisms of $U$ and $Y$. Our assumption $T(U) = 0$ says that the second vertical arrow is injective. Then, by the five lemma, the third vertical arrow is also injective, and hence $T(Y) = 0$.

For (2) $\Rightarrow$ (3), a result of Guletski{\u\i}-Pedrini \cite[Th.~7]{Guletski03} shows that $T(X) = 0$ if and only if $M(X) \in \DMgmab$. By applying several Gysin triangles, we also know that $M(X) \in \DMgmab$ if and only if $M(U) \in \DMgmab$. \qed

\subsection{Proof of (3) \texorpdfstring{$\Rightarrow$}{=>} (1)}
Consider the distinguished triangle
\begin{equation} \label{eq:tria}
M'(U) \to M(U) \to L\Alb(U) \to M'(U)[1].
\end{equation}
Our assumption $p_g(U) = 0$ says that $H_2(U, \Q) = H_2(U, \Q)_{\leq 1}$. Then, by Theorem \ref{thm:real} and Corollary \ref{cor:real}, we have
$$H_i\Big(R^B\big(M'(U)\big)\Big) =
\begin{cases}
H_3(U, \Q) & i = 3 \\
H_4(U, \Q) & i = 4 \\
0 & i < 3 \text{ or } i > 4.
\end{cases}$$

Next, consider the motive $M'(U)^\vee(2)[4]$, whose Betti realization is
$$H_i\Big(R^B\big(M'(U)^\vee(2)[4]\big)\Big) =
\begin{cases}
H_0^{\BM}(U, \Q) & i = 0 \\
H_1^{\BM}(U, \Q) & i = 1 \\
0 & i < 0 \text{ or } i > 1.
\end{cases}$$
It fits in a distinguished triangle
$$L\Alb(U)^\vee(2)[4] \to M^c(U) \to M'(U)^\vee(2)[4] \to L\Alb(U)^\vee(2)[5].$$
Since $L\Alb(U)^\vee(2)[4] \in \DMgmeff$ by Cartier duality \cite[Prop.~4.5.1]{BVK}, we have $M'(U)^\vee(2)[4] \in \DMgmeff$. This allows us to apply the functor $L\Alb$ to $M'(U)^\vee(2)[4]$. By Theorem \ref{thm:real} and Corollary \ref{cor:real}, the morphism
\begin{equation} \label{eq:cons}
M'(U)^\vee(2)[4] \to L\Alb\big(M'(U)^\vee(2)[4]\big)
\end{equation}
induces an isomorphism
$$R^B\big(M'(U)^\vee(2)[4]\big) \xrightarrow{\sim} R^B\Big(L\Alb\big(M'(U)^\vee(2)[4]\big)\Big).$$

We are ready to apply conservativity. Our assumption $M(U) \in \DMgmab$ implies $M^c(U) \in \DMgmab$. Moreover, since $d_{\leq 1}\DMgmeff \subset \DMgmab$, we have $L\Alb(U)^\vee(2)[4] \in  \DMgmab$ and hence $M'(U)^\vee(2)[4] \in \DMgmab$. Then, according to Theorem~\ref{thm:cons}, the morphism \eqref{eq:cons} is itself an isomorphism.

We thus obtain from \eqref{eq:tria} a distinguished triangle
\begin{multline} \label{eq:tria2}
L\Alb\big(M'(U)^\vee(2)[4]\big)^\vee(2)[4] \to M(U) \to L\Alb(U) \\
\to L\Alb\big(M'(U)^\vee(2)[4]\big)^\vee(2)[5].
\end{multline}
Taking motivic homology, we have an exact sequence
$$h_0\Big(L\Alb\big(M'(U)^\vee(2)[4]\big)^\vee(2)[4]\Big) \to h_0(U) \to h_0\big(L\Alb(U)\big),$$
where the second arrow is given by the Albanese morphism of $U$. Hence, to prove $T(U) = 0$, it suffices to show that
$$h_0\Big(L\Alb\big(M'(U)^\vee(2)[4]\big)^\vee(2)[4]\Big) = 0.$$

For this we observe that
$$L\Alb\big(M'(U)^\vee(2)[4]\big) \simeq \bigoplus_{i = 0}^1L_i\Alb\big(M'(U)^\vee(2)[4]\big)[i] \simeq \bigoplus_{i = 0}^1L_i\Alb^c(U)[i].$$
Here we have used the fact that the Hodge realization gives a full embedding $\M_1 \subset \MHS$ \cite[Sect.~10.1.3]{Hodgeiii}. We compute
\begin{align*}
& h_0\Big(L\Alb\big(M'(U)^\vee(2)[4]\big)^\vee(2)[4]\Big) \\
={} & h_0\bigg(\bigoplus_{i = 0}^1\big(L_i\Alb^c(U)[i]\big)^\vee(2)[4]\bigg) \\
={} & \Hom_{\DMgm}\bigg(\Q, \bigoplus_{i = 0}^1\big(L_i\Alb^c(U)[i]\big)^\vee(2)[4]\bigg) \\
={} & \Hom_{\DMgm}\bigg(\bigoplus_{i = 0}^1L_i\Alb^c(U)[i], \Q(2)[4]\bigg) \\
={} & \Hom_{\DMgm}\big(L_0\Alb^c(U), \Q(2)[4]\big) \oplus \Hom_{\DMgm}\big(L_1\Alb^c(U), \Q(2)[3]\big).
\end{align*}
By \cite[Prop.~10.6.2]{BVK}, we have
$$L_0\Alb^c(U) \simeq \begin{cases}
\Q & \text{if $U$ is projective} \\
0 & \text{if not}.
\end{cases}$$
Since
$$\Hom_{\DMgm}\big(\Q, \Q(2)[4]\big) = \CH_{-2}(\pt) = 0,$$
we find in both cases $\Hom_{\DMgm}\big(L_0\Alb^c(U), \Q(2)[4]\big) = 0$.

Further, by \cite[Cor.~12.11.2]{BVK}, the $1$-motive $L_1\Alb^c(U)$ is represented by a two-term complex in degrees $0$ and $-1$
$$\Q^{\oplus r} \to A \otimes \Q,$$
where $A$ is an abelian variety and $r = \#\{\text{connected components of $D$}\} - 1$. In other words, there is an extension of $1$-motives
\begin{equation} \label{eq:ext}
0 \to (A \otimes \Q)[-1] \to L_1\Alb^c(U) \to \Q^{\oplus r} \to 0,
\end{equation}
which yields an exact sequence
\begin{multline*}
\Hom_{\DMgm}\big(\Q, \Q(2)[3]\big)^{\oplus r} \to \Hom_{\DMgm}\big(L_1\Alb^c(U), \Q(2)[3]\big) \\ \to \Hom_{\DMgm}\big((A \otimes \Q)[-1], \Q(2)[3]\big).
\end{multline*}
Since
$$\Hom_{\DMgm}\big(\Q, \Q(2)[3]\big) = \CH_{-2}(\pt, 1) = 0,$$
it suffices to show that $\Hom_{\DMgm}\big((A \otimes \Q)[-1], \Q(2)[3]\big) = 0$.

We may assume $A$ to be the Albanese variety of a smooth projective surface $S$ (which exists by the Lefschetz hyperplane theorem). Recall the Chow-K\"unneth decomposition of $M(S)$ \cite[Th.~3]{Murre90}
$$M(S) \simeq \bigoplus_{i = 0}^4M_i(S)[i].$$
We have $M_{4 - i}(S) \simeq M_i(S)^\vee(2)$ and $M_1(S) \simeq (A \otimes \Q)[-1]$. Hence
\begin{align} \label{eq:cks}
\Hom_{\DMgm}\big((A \otimes \Q)[-1], \Q(2)[3]\big) & = \Hom_{\DMgm}\big(M_1(S), \Q(2)[3]\big) \\
& = \Hom_{\DMgm}\big(\Q, M_3(S)[3]\big) \nonumber \\
& = \CH_0\big(M_3(S)[3]\big) \nonumber \\
& = 0, \nonumber
\end{align}
where the last equality follows again from \cite[Th.~3]{Murre90}. The proof of Theorem \ref{thm:2} is now complete. \qed

\subsection{``Chow-K\"unneth'' decomposition}
Our proof of Theorem~\ref{thm:2} also leads to the following consequence.

\begin{cor}
Assume one of the equivalent conditions in Theorem~\ref{thm:2}. Then $M(U)$ admits a ``Chow-K\"unneth'' decomposition
$$M(U) \simeq \bigoplus_{i = 0}^2 L_i\Alb(U)[i] \oplus \bigoplus_{i = 3}^4 L_{4 - i}\Alb^c(U)^\vee(2)[i].$$
In particular, it is Kimura-O'Sullivan finite.
\end{cor}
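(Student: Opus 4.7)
The plan is to promote the distinguished triangle \eqref{eq:tria} from the proof of Theorem~\ref{thm:2} into a direct sum decomposition of $M(U)$. That proof already furnishes the identification
$$M'(U) \simeq \bigoplus_{i=3}^{4} L_{4-i}\Alb^c(U)^\vee(2)[i],$$
obtained by applying the duality $(-)^\vee(2)[4]$ to the isomorphism $M'(U)^\vee(2)[4] \simeq \bigoplus_{i=0}^{1} L_i\Alb^c(U)[i]$ established there via conservativity. Combined with $L\Alb(U) \simeq \bigoplus_{i=0}^{2} L_i\Alb(U)[i]$ --- which holds because $\M_1$ has cohomological dimension one --- the desired decomposition reduces to the vanishing of the connecting morphism
$$\delta\colon L\Alb(U) \longrightarrow M'(U)[1].$$

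To show $\delta = 0$ I would decompose it component-wise. It suffices to verify, for every $0 \leq i \leq 2$ and $0 \leq j \leq 1$, the equality
$$\Hom_{\DMgm}\bigl(L_i\Alb(U),\ L_j\Alb^c(U)^\vee(2)[5-j-i]\bigr) = 0.$$
Each such group is a pairing of 1-motives (since $L_j\Alb^c(U)^\vee(1) \in \M_1$ by Cartier duality \cite[Prop.~4.5.1]{BVK}) valued in $\Q(2)[\,\cdot\,]$. Lifting the participating 1-motives to direct summands of motives of smooth projective curves and of a smooth projective surface $S$ whose Albanese realizes the relevant abelian part --- available via the Lefschetz hyperplane argument used at the end of the proof of Theorem~\ref{thm:2} --- each Hom reduces to a motivic cohomology group of the form $\Hom_{\DMgm}(M_a(S),\Q(2)[b])$ with indices $(a,b)$ outside the range allowed by Murre's Chow-K\"unneth decomposition \cite[Th.~3]{Murre90}, and therefore vanishes. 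This is the same mechanism that produced $\Hom_{\DMgm}\bigl((A\otimes\Q)[-1],\Q(2)[3]\bigr) = 0$ at the close of the proof of Theorem~\ref{thm:2}.

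The main obstacle is the case analysis: the shift $5-j-i$ ranges over $\{2,3,4,5\}$, and one must separately account for the lattice summand of $L_1\Alb^c(U)$ recorded in the extension \eqref{eq:ext}, and for the projective case in which $L_0\Alb^c(U) \simeq \Q$. Once these six vanishings are established, the triangle \eqref{eq:tria} splits and yields the advertised decomposition. Kimura-O'Sullivan finiteness is then automatic: every summand lies, up to a fixed Tate twist and shift, in $d_{\leq 1}\DMgmeff$, which is generated by the Kimura-O'Sullivan finite motives of smooth projective curves and is closed under the operations that preserve this property.
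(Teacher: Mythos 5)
Your proposal follows the paper's proof in structure: same triangle \eqref{eq:tria2}, same reduction to the vanishing of the connecting morphism via six Hom groups, same conclusion about Kimura-O'Sullivan finiteness. The one point where your claimed mechanism overstates what Murre gives you is the final vanishing step. After you decompose $L_1\Alb(U)$ and $L_1\Alb^c(U)^\vee(2)$ into lattice, torus, and abelian pieces via the extensions \eqref{eq:ext} and \eqref{eq:ext2}, the four resulting Hom groups are $\Hom\bigl(M_1(S')[1], M_3(S)[4]\bigr)$, $\Hom\bigl(M_1(S')[1], \Q(2)[4]\bigr)$, $\Hom\bigl(\Q(1)[1], M_3(S)[4]\bigr)$, and $\Hom\bigl(\Q(1)[1], \Q(2)[4]\bigr)$; only the second of these genuinely reduces to $\Hom\bigl(M_a(S),\Q(2)[b]\bigr)$ and to Murre's vanishing $\CH_0\bigl(M_3(S)\bigr)=0$. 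The first, for example, becomes $\Hom\bigl(M_1(S')\otimes M_1(S),\Q(2)[3]\bigr)$, which is a Hom between Chow motives on the fourfold $S'\times S$ and falls outside the scope of \cite[Th.~3]{Murre90}. The paper handles these remaining three terms by the general fact that $\Hom_{\DMgm}(M,M'[i])=0$ for any two Chow motives $M,M'$ and any $i>0$, citing \cite[Ch.~5, Cor.~4.2.6]{VSF} (ultimately a consequence of the vanishing of negative higher Chow groups). You should invoke that vanishing explicitly rather than attributing everything to Murre; once you do, your argument matches the paper's.
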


\begin{proof}
Assuming $M(U) \in \DMgmab$, we have obtained in \eqref{eq:tria2} a distinguished triangle
\begin{multline*}
\bigoplus_{i = 3}^4 L_{4 - i}\Alb^c(U)^\vee(2)[i] \to M(U) \to \bigoplus_{i = 0}^2 L_i\Alb(U)[i] \\
\to \bigoplus_{i = 3}^4 L_{4 - i}\Alb^c(U)^\vee(2)[i + 1].
\end{multline*}
For the distinguished triangle to split, it suffices to show that
$$\Hom_{\DMgm}\bigg(\bigoplus_{i = 0}^2 L_i\Alb(U)[i], \bigoplus_{i = 3}^4 \big(L_{4 - i}\Alb^c(U)\big)^\vee(2)[i + 1]\bigg) = 0.$$
The left-hand side consists of six direct summands, all of which can be computed explicitly. To keep the paper short we shall only do the most complicated one, that is,
\begin{equation} \label{eq:cku}
\Hom_{\DMgm}\big(L_1\Alb(U)[1], L_1\Alb^c(U)^\vee(2)[4]\big).
\end{equation}

By \cite[Cor.~9.2.3]{BVK}, the $1$-motive $L_1\Alb(U)$ is represented by the two-term complex in degrees $0$ and $-1$
$$0 \to \Alb(U) \otimes \Q.$$
This gives an extension of $1$-motives
\begin{equation} \label{eq:ext2}
0 \to (\G_m \otimes \Q)^{\oplus s}[-1] \to L_1\Alb(U) \to (A' \otimes \Q)[-1] \to 0,
\end{equation}
where $A'$ is the abelian part of the semi-abelian variety $\Alb(U)$. Again we assume $A'$ to be the Albanese variety of a smooth projective surface $S'$, and hence $(A' \otimes \Q)[-1] \simeq M_1(S')$. We also have $(\G_m \otimes \Q)[-1] \simeq \Q(1)$.

Combining \eqref{eq:ext} and \eqref{eq:ext2}, we see that \eqref{eq:cku} sits in the middle of several extensions involving the following four terms:
\begin{enumerate}
\item $\Hom_{\DMgm}\big(M_1(S')[1], M_3(S)[4]\big)$;

\item $\Hom_{\DMgm}\big(M_1(S')[1], \Q(2)[4]\big)$;

\item $\Hom_{\DMgm}\big(\Q(1)[1], M_3(S)[4]\big)$;

\item $\Hom_{\DMgm}\big(\Q(1)[1], \Q(2)[4]\big)$.
\end{enumerate}
The vanishing of the second term is shown in \eqref{eq:cks} (with $S'$ replaced by~$S$). The vanishing of the three other terms follows from the fact that given two Chow motives $M$ and $M'$, we have $\Hom_{\DMgm}\big(M, M'[i]\big) = 0$ for all $i > 0$ \cite[Ch.~5, Cor.~4.2.6]{VSF}. Hence \eqref{eq:cku} vanishes.

Finally, by \cite[Rem.~5.11]{Mazza04}, all elements in $d_{\leq 1}\DMgmeff$ are Kimura-O'Sullivan finite. The last statement follows since Kimura-O'Sullivan finiteness is closed under direct sums and tensor products.
\end{proof}

On the other hand, there exist motives of smooth surfaces which are not Kimura-O'Sullivan finite \cite[Th.~5.18]{Mazza04}.

\bibliography{myref.bib}
\bibliographystyle{alpha}

\end{document}